\documentclass[11pt, letterpaper]{article}
\usepackage[margin=2.5cm]{geometry}
\usepackage{helvet}
\usepackage{courier}
\usepackage{url}
\usepackage{bm}
\usepackage{graphicx}
\usepackage{tcolorbox}
\usepackage[dvipsnames]{xcolor}
\usepackage{enumerate}
\usepackage{pgfplots}
\usepackage{algorithm}
\usepackage{eucal}
\usepackage{mathtools}
\usepackage{float}
\usepackage{amsthm}
\usepackage{amsmath,amssymb,amsfonts}
\usepackage{xcolor}
\usepackage{bbold}
\usepackage{xspace}
\usepackage{lettrine}
\usepackage{subfigure}
\usepackage{tikz}
\usetikzlibrary{backgrounds,automata}
\usetikzlibrary{shapes.geometric}
\usepackage{latexsym}
\usepackage{fancybox}
\usepackage{setspace}
\usepackage{graphicx}
\usepackage{thmtools}

\usepackage{hyperref}
\hypersetup{
    colorlinks=true,    % Colors the text of links instead of showing colored boxes
    citecolor=blue,     % Color of citation links (e.g., [1] or Smith et al., 2020)
    linkcolor=blue,     % Color of internal links (sections, figures, tables)
    urlcolor=blue       % Color of web links
}

\usepackage[capitalise]{cleveref}

\theoremstyle{plain}
\newtheorem{theorem}{Theorem}[section]
\newtheorem{lemma}[theorem]{Lemma}
\newtheorem{definition}[theorem]{Definition}

\newtheorem{remark}[theorem]{Remark}
\newtheorem{corollary}[theorem]{Corollary}

\newtheorem{example}[theorem]{Example}

\Crefname{theorem}{Theorem}{Theorems}
\Crefname{lemma}{Lemma}{Lemmas}
\Crefname{definition}{Definition}{Definitions}
\Crefname{corollary}{Corollary}{Corollaries}
\usepackage[backend=biber, maxbibnames=99]{biblatex}
\usepackage{algpseudocode}
\newcommand{\PG}{\mathrm{PG}}

\begin{document}
\title{Explicit strong blocking sets and minimal codes from expander graphs using a new graph parameter}
\author{Anurag Bishnoi\thanks{Delft University of Technology, Netherlands. \emph{E-mail}: \text{a.bishnoi@tudelft.nl}.}
\and 
Ioannis Kakatelis \thanks{Czech Technical University in Prague, Czech Republic. \emph{E-mail}: \text{ioannis.kakatelis@fit.cvut.cz}.}
}
\date{}

\maketitle

\begin{abstract}
    A strong blocking set is a set of points in a finite projective space that intersects every hyperplane of the space in a spanning set of the hyperplane. 
    These objects have been studied both in finite geometry and coding theory due to their equivalence with minimal codes. 
    We give an explicit construction of strong blocking sets using expander graphs that improves the explicit construction of Alon et al.~\cite{ABDN2023} by introducing a new geometrically motivated graph parameter that is better suited for this problem.
    Using this approach, we improve the constant $C$ in the explicit construction of a strong blocking set of size $C (q+1) k$ in the $(k - 1)$-dimensional projective space over $\mathbb{F}_q$ for every prime power $q \geq 7$.
\end{abstract}

\section{Introduction}
Strong blocking sets in finite projective spaces have been studied extensively in the last ten years.
These objects were first introduced by Davydov et al.~\cite{davydov2011linear} to construct saturating sets and then reintroduced by H\'eger et al.~\cite{heger2015search} to study the $q$-analog of a classic combinatorial search problem: identifying an unknown projective point using subspace queries. 
Since then, these objects have been reinvented and studied under various names, such as cutting blocking sets \cite{alfarano2022geometric,bartoli2022cutting, bonini2021minimal}, generating sets \cite{fancsali2014lines}, and the now commonly used term of strong blocking sets \cite{ABDN2023, heger2021short, lia2025note}. 

The recent flurry of results is primarily motivated by a connection with coding theory that was independently discovered by Alfarano et al.~\cite{alfarano2022geometric} and Tang et al.~\cite{tang2021full}. 
They showed that a strong blocking set of size $n$ in a finite projective space $\mathrm{PG}(k - 1, q)$ is equivalent to a minimal $k$-dimensional linear code of length $n$. 
Minimal codes are linear codes in which the support of a nonzero codeword $u$ is a subset of the support of a nonzero codeword $v$ if and only if $u = \lambda v$ for some scalar $\lambda$. 
These codes have their own history dating back to 1993 in the work of Massey on secret sharing protocols \cite{massey1993minimal}. 
Then Ashikhmin and Barg established sufficient conditions for their existence \cite{ashikhmin1998minimal} which led to several interesting constructions. 
In 2019, it was shown that these objects are in fact equivalent to strong blocking sets, thus converging these two threads of research. 
Since then, strong blocking sets and minimal codes have also found connections with perfect hashing \cite{Bishnoi2023trifference} and additive combinatorics \cite{borello2025geometry}.

In \cite{alfarano2022three}, it was shown that the smallest length $n$ of a minimal code of dimension $k$, or equivalently the smallest size $n$ of a strong blocking set in $\PG(k - 1, q)$, must be at least $(q + 1)(k - 1)$. This lower bound was further improved in \cite{Bishnoi2023trifference} and \cite{scotti2024lower}.
A probabilistic argument \cite{Bishnoi2023trifference, heger2021short} shows the existence of such codes of length at most $2(q + 1)k$.
One of the central problems on minimal codes is to give \textit{explicit constructions} of such codes that have length at most $C q k$ for some constant $C$. 
The main regime of interest is fixed, but possibly large, $q$ and $k \rightarrow \infty$. 
The first author solved this problem in collaboration with Alon, Das and Neri~\cite{ABDN2023} by giving a graph-theoretic construction that combined explicit construction of asymptotically good error-correcting codes with explicit constant degree expander graphs. While for $q \leq 5$, the best constant is obtained via earlier concatenation based constructions \cite{bartoli2023small}, for all $q \geq 7$, the graph-theoretic construction is significantly better.

\textit{In this paper}, we further optimize the graph-theoretic construction by following a similar general approach of \cite{ABDN2023} but using a better graph parameter that is fine-tuned for geometrical constraints.
This gives us a better constant $C = C(q)$ for every $q \geq 7$, summarized in Table~\ref{tab:my_table}.
For example, the best construction for $q = 7$ was of size $46.32(q + 1)k$, while our new construction is of size at most $44.61(q + 1)k$. 
The new graph parameter $f_q(G)$ for a graph $G$ incorporates the field size $q$ using the fact that through a fixed codimension-$2$ space there are exactly $q + 1$ hyperplanes. 
The previous graph parameter $\iota(G)$ used in \cite{ABDN2023}, known as the vertex integrity of a graph, did not depend on the field size. 
We show that $f_q(G) \geq \iota(G)$ for all graphs $G$ and $q \geq 2$, and we prove a spectral lower bound on $f_q(G)$ that is much better than the best known lower bound on $\iota(G)$. 
This lower bound is the key source of improvement in the final optimization.

\subsection*{Paper Outline}
In \cref{sec:prelim} we provide the necessary background and required notation to the reader about coding theory, projective spaces, strong blocking sets and expander graphs. In \cref{sec:intro_param}, we introduce the new graph parameter and prove a spectral lower bound on this parameter. In \cref{sec:construction}, we explain the construction of the strong blocking set in relation to the graph parameter.
In \cref{sec:ag_codes}, we optimize the size of strong blocking set and compare the new results with those previously derived by vertex integrity using algebraic-geometric codes, almost Ramanujan graphs and the tool of field reduction to handle non-quadratic fields. Finally, in \cref{sec:conclusion} we summarize the results of this paper and discuss how this new parameter can give rise to new research directions.

\section{Preliminaries}\label{sec:prelim}
In this section, we review the basic notions and preliminary results from coding theory that will be used throughout the paper, with particular emphasis on minimal linear codes and their geometric interpretation. We also recall the concept of expander graphs and some known results on constant degree expanders. 
Throughout the remainder of the paper, we assume that $q$ is a prime power.

\subsection{Error correcting codes and minimal codes }
We denote with $\mathbb{F}_q$ to be the finite field with of order $q$ and let $n \in \mathbb{N}$.
\begin{definition}
 The \textit{(Hamming) support} of a vector $v\in \mathbb{F}^{n}_{q}$ is the set 
 $$ \sigma(v):=\{ i \, : \, v_i\neq 0\} \subseteq [n].$$
 The \textit{(Hamming) weight} of $v$ is
 $$ wt(v):=|\sigma(v)|.$$
\end{definition}

The Hamming weight induces a  distance on $\mathbb{F}_q^n$, given by $d(u,v):= wt(u-v)$. 
This is known as the \textit{Hamming distance} and it is fundamental in the theory of error-correcting codes. 
It is the number of positions where the coordinates of two vectors differ.

\begin{definition}
An $[n,k,d]_q$ code $C$ is a $k$-dimensional subspace of $\mathbb{F}_q^n$, and
$$d \coloneqq \min\{wt(v) \, : \, v \in C \setminus\{0\}\}.$$
The parameter $d$ is called the \textit{distance} of $C$. 
The elements of $C$ are called \textit{codewords}. 
The parameters $n$ and $k$ are the length and the  dimension of the code, respectively.
A \textit{generator matrix} for $C$ is a matrix $G\in \mathbb{F}_q^{k\times n}$ such that 
$$C=\{uG\,:\, u \in \mathbb{F}_q^k\};$$
that is, the rows of $G$ span $C$.
The rate of a code $C$ is equal to $\frac{k}{n}$ and and the relative hamming distance of the code is equal to $\frac{d}{n}$.
\end{definition}

\begin{example}
Let us consider the code $C = \{000,111\} \subset \mathbb{F}_2^3$.
Then we can check that this is a $[3,1,3]_{2}$ code. 
In general, such $[n, 1, n]_q$ codes are called the repetition codes and they are always minimal.
\end{example}

The standard problem in coding theory is to study the extremal values of one parameter with respect to the others. 
This is captured by studying the asymptotic rate and relative distance as follows. 

\begin{definition}
Let $\{n_i \}_{i \geq 1}$ be a strictly increasing sequence of lengths and suppose that there exist sequences $\{k_i\}_{i \geq 1}$ and $\{d_i\}_{i \geq 1}$ such that
for all $i \geq 1$ there is an $[n_i, k_i, d_i]_q$ code $C_{i}$. 
Then the sequence $\mathbf{C} =  \{C_{i}\}_{i \geq 1}$ is called an \textit{$(R,\delta)_q$-family of codes}, where 
the \textit{rate} $R$ of $\mathbf{C}$ is defined as 
\[R \coloneqq \liminf_{i \to \infty} \frac{k_i}{n_i},\]
and the \textit{relative distance} is defined as 
\[\delta \coloneqq \liminf_{i \rightarrow \infty} \frac{d_i}{n_i}.\]
\end{definition}

One of the central problems on error-correcting codes is to understand the trade-off between the rate and the relative distance of codes. 
A family of codes $\mathbf{C}$ for which $R > 0$ and $\delta> 0$, is known as an \textit{asymptotically good code} (see \cite[Section 1.8]{coding_theory_book} or \cite[Chapter 9]{vanLint_book}). 
An easy probabilistic argument shows the existence of such codes for every $\delta \in [0, 1 - 1/q)$ and $R = 1 - H_q(\delta)$, where
\[H_q(x) \coloneqq x \log_q(q - 1) - x \log_q(x)  - (1 - x) \log_q(1 - x),\]
is the $q$-ary entropy function, defined on the domain $0 \leq x \leq 1 - 1/q$. 
This is known as the Gilbert-Varshamov bound \cite[Section 5.1]{vanLint_book}. 
The first explicit construction of asymptotically good codes was given by Justesen \cite{justesen1972class}, who showed that for every $0 < R < 1/2$, there is an explicit family of codes with rate $R$ and relative distance $\delta \geq (1 - 2R) H_q^{-1}\left(\frac{1}{2}\right)$. 
Improving the values of the rate $R$ and the relative distance $\delta$ for which there is an explicit construction, and reducing the computational complexity of these constructions, has been an active area of research in coding theory since the 1970s (see for example \cite{ta2017explicit, tsfasman1982modular}). 
One of the most significant developments in the area was the use of modular curves to show that, for square $q \geq 49$, there are explicit constructions of linear codes over $\mathbb{F}_q$ that have a better rate and relative distance than the one given by the Gilbert-Varshamov bound. 
For further details  see \cite{couvreur2021algebraic} and \cite{tsfasman2013algebraic}.

\textit{In this paper}, we study a special class of codes called \emph{minimal linear codes}. 
These codes are interesting from various different perspectives.

 Let $C$ be an $[n,k,d]_q$ code. A nonzero codeword $v \in C$ is said to be \textbf{minimal} (in $C$) if $\sigma(v)$ is minimal with respect to the inclusion in the set 
 $$ \sigma(C) \coloneqq \{ \sigma(c) \, : \, c \in C \setminus\{\vec{0}\}\}.$$

 Note that in a linear code, if $v$ is a minimal codeword then for any codeword $u$ we have $\sigma(u) \subseteq \sigma(v)$ if and only if $u = \lambda v$ for some scalar $\lambda$. 

\begin{definition}
The (linear) code $C$ over a finite field $\mathbb{F}_q$ is called a \textbf{minimal code} if all of its nonzero codewords are minimal.
Equivalently, a code is minimal if for all $u,v \in C$, we have that $\sigma(u) \subseteq \sigma(v) \iff \exists \lambda \in \mathbb{F}_q ~u = \lambda v$.
\end{definition}

\subsection{Minimal codes and strong blocking sets}
In this section, we first describe the equivalence between minimal codes and strong blocking sets, and then note the relevant prior results. 
For $k > 1$, let $\mathrm{PG}(k-1, q)$ denote the projective space of dimension $k - 1$ obtained from the vector space $\mathbb{F}_q^k$.
Its points, lines, $\ldots$, $m$-spaces, are the $1$-dimensional, $2$-dimensional, $\ldots$, $(m + 1)$-dimensional, vector subspaces of $\mathbb{F}_q^k$.  
For a nonzero vector $v \in \mathbb{F}_q^k$, let $\langle v \rangle$ be the corresponding point of $\mathrm{PG}(k-1, q)$.
For a set $S$ of points in the projective space we denote by $\langle S \rangle$ the span of these points, that is, the smallest dimensional projective subspace containing $S$.

\begin{definition}\label{def:projective_system}
A \textit{projective} $[n,k,d]_q$ \textit{system} is a (multi)set of $n$ points, $M \subseteq \mathrm{PG}(k-1,q)$, such that $M$ spans the projective space and 
$$d=n-\max\{|H\cap M| : H \text{ is a hyperplane}\}.$$
\end{definition}

The term projective system and the notation used come from the equivalence with linear codes of a given minimal distance.
A projective $[n,k,d]_q$ {system} is simply a dual interpretation of a nondegenerate $[n,k,d]_q$ code. 
We say that an $[n,k,d]_q$ code $C$ is \textit{nondegenerate} if there is no identically zero coordinate in $C$. 
The correspondence comes from putting representatives of the points of the projective system as columns of a $k\times n$ matrix, and considering the code generated by (the rows of) this matrix. Vice versa, given a generator matrix of a nondegenerate code, we can obtain the associated projective system by taking the columns of such a matrix as a multiset of points in $\mathrm{PG}(k-1,q)$. 

There is more work required to make this correspondence well-defined, and 
but it becomes so when we consider $\mathrm{PGL}(k,q)$-equivalence classes of projective $[n,k,d]_q$ systems and the so-called monomial equivalence classes of nondegenerate $[n,k,d]_q$ codes.
We refer the reader to~\cite[Theorem 1.1.6]{tsfasman2013algebraic} for a formal treatment of the correspondence. 

Due to this correspondence, a sequence $\{M_i\}_{i\in \mathbb{N}}$ of projective systems is called an \textit{$(R,\delta)_q$-family of projective systems} if the corresponding family of codes is an $(R,\delta)_q$-family of codes.
We now define the main finite geometric object studied in this paper. 

\begin{definition}
 A set $ M\subseteq \mathrm{PG}(k-1,q)$ is said to be a \textit{strong blocking set} if 
 $$\langle H\cap M\rangle=H,$$
 for every hyperplane $H$ of $\mathrm{PG}(k-1,q)$.
\end{definition}

\begin{theorem}[see \cite{alfarano2022geometric}, \cite{tang2021full}]
 Let $C$ be a nondegenerate $[n,k,d]_q$ code and let $G=(g_1 \mid \ldots \mid g_n) \in \mathbb{F}_q^{k\times n}$ be any of its generator matrices. The following are equivalent:
 \begin{enumerate}
     \item $C$ is a minimal code;
     \item $M=\{\langle g_1 \rangle,\ldots, \langle g_n \rangle\}$ is a strong blocking set in $\mathrm{PG}(k-1,q)$.
 \end{enumerate}
\end{theorem}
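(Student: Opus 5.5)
The plan is to translate supports of codewords into hyperplane intersections and then compare the two minimality notions directly. Every codeword of $C$ has the form $c_u = uG = (u\cdot g_1,\ldots,u\cdot g_n)$ for some $u\in\mathbb{F}_q^k$. Since $C$ is nondegenerate, the columns $g_i$ are nonzero, so the points $\langle g_i\rangle$ are well defined. By the projective-system correspondence the $g_i$ span $\mathbb{F}_q^k$, so $u\mapsto c_u$ is injective. For $u\neq 0$ let $H_u=\{\langle x\rangle : u\cdot x=0\}$ be the hyperplane determined by $u$. Then the complement of the support is
\[
[n]\setminus\sigma(c_u)=\{i : \langle g_i\rangle\in H_u\}.
\]
Consequently, $\sigma(c_u)\subseteq\sigma(c_v)$ holds if and only if $M\cap H_v\subseteq M\cap H_u$, counted with indices.

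For (2)$\Rightarrow$(1), suppose $M$ is a strong blocking set and $\sigma(c_u)\subseteq\sigma(c_v)$ with $u,v\neq 0$. By the translation above, $H_u$ contains $M\cap H_v$. Since $M$ is strong, $\langle M\cap H_v\rangle=H_v$, so $H_v\subseteq H_u$, and hence $H_u=H_v$ because both are hyperplanes. Therefore $u=\lambda v$, which gives $c_u=\lambda c_v$, so $C$ is minimal.

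For (1)$\Rightarrow$(2), argue by contraposition. Suppose some hyperplane $H_v$ has $\langle M\cap H_v\rangle=S\subsetneq H_v$. Then $S$ has codimension at least $2$, so more than one hyperplane contains $S$. In fact at least $q+1\geq 3$ do, and we only need two. Pick such a hyperplane $H_u\neq H_v$. Then $M\cap H_v\subseteq S\subseteq H_u$, so $\sigma(c_u)\subseteq\sigma(c_v)$. However, $u$ is not a scalar multiple of $v$, so $c_u\neq\lambda c_v$ by injectivity. This contradicts minimality.

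The only delicate point is bookkeeping. We must ensure nondegeneracy so that the points are defined, and injectivity of $u\mapsto uG$ so that distinct hyperplanes give non-proportional codewords. We must also handle the edge case $M\cap H_v=\emptyset$, where $S$ is the empty span; the same argument still applies there. Beyond this, no step should be a real obstacle.
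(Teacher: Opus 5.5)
Your proof is correct. The paper gives no proof of its own here and simply quotes the result from \cite{alfarano2022geometric} and \cite{tang2021full}; your argument is the standard one behind the equivalence: identify $[n]\setminus\sigma(uG)$ with the indices of points of $M$ on $H_u$, so that support inclusion becomes inclusion of hyperplane sections, then use that a proper subspace of a hyperplane lies in a second hyperplane.

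One point worth making explicit: in (1)$\Rightarrow$(2) your pair $c_u,c_v$ may have equal supports. So you are relying on the paper's equivalent form of minimality, $\sigma(u)\subseteq\sigma(v)\iff u=\lambda v$, rather than literal inclusion-minimality of supports. That is legitimate. If you want strict inclusion instead, take a codimension-$2$ space $W$ with $S\subseteq W\subset H_v$ and a point $P\in M\setminus H_v$ (which exists because $M$ spans), and set $H_u=\langle W,P\rangle$.
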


One of the main problem -- from both a coding theoretic and geometric point of view -- is the construction of small strong blocking sets, or, equivalently, of short minimal codes. 
The first step is to ask how small a strong blocking set can be. 
The following is a general lower bound observed in~\cite{alfarano2022geometric}, proved using the Combinatorial Nullstellensatz (see~\cite{ABDN2023, heger2021short} for alternative proofs.)

\begin{theorem}
\label{thm:strong_lower_bound}
 For any prime power $q$, every strong blocking set in $\mathrm{PG}(k - 1, q)$ has size at least  $(q+1)(k-1).$
\end{theorem}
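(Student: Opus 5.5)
One route is via the Combinatorial Nullstellensatz, as in \cite{alfarano2022geometric}. We argue by contradiction: suppose $M=\{\langle g_1\rangle,\dots,\langle g_n\rangle\}\subseteq \PG(k-1,q)$ is a strong blocking set with $n<(q+1)(k-1)$. For $x\in\mathbb{F}_q^k$ write $\ell_i(x)=g_i\cdot x$, so that the hyperplane $x^\perp$ contains exactly those points $\langle g_i\rangle$ with $\ell_i(x)=0$. The key geometric reformulation is the following. $M$ is strong blocking if and only if, for every nonzero $x$ and every codimension-$2$ space $W\subset x^\perp$, some point of $M$ lies in $x^\perp\setminus W$. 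Equivalently, $M$ meets every hyperplane in a set not contained in a codimension-$2$ space. Through a fixed codimension-$2$ space $W$ there are exactly $q+1$ hyperplanes, and they partition the points outside $W$. Hence every such $W$ has at least $q+1$ points of $M$ outside it, that is, $|M\cap W|\le n-(q+1)$.

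To exploit this algebraically, fix $x$ and $y$ linearly independent. Then $W=x^\perp\cap y^\perp$, and the hyperplanes through $W$ are $(x+ty)^\perp$ for $t\in\mathbb{F}_q$, together with $y^\perp$. The strong blocking property says that for every $t$ some $i$ has $\ell_i(x+ty)=0$ and $\ell_i(y)\neq 0$. I would encode this with the polynomial in $k$ variables
\[P(x)=\prod_{i=1}^{n}\ell_i(x)\]
or with a refined version, and run an induction on $k$. The base case $k=2$ is immediate: $\PG(1,q)$ has $q+1$ points and every point is a hyperplane, so $M$ must be all of $\PG(1,q)$ and $|M|=q+1$. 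For the induction step, choose a point $p\in M$ and project from a suitable point, or restrict to a hyperplane $H$. The aim is to show that $M\cap H$, or the projection of $M$ from a point off $M$, is again strong blocking in a space of one lower dimension, while at least $q+1$ points are lost.

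The induction step is the main obstacle. Projection from a point $p\notin M$ maps $M$ to a strong blocking set in $\PG(k-2,q)$, because hyperplanes of the quotient correspond to hyperplanes through $p$ and spans are preserved. But projection does not obviously reduce the size by $q+1$, since distinct points can merge. So the purely combinatorial induction only gives the weaker bound $|M|\ge |\text{image}|$.

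For this reason I would instead use the Nullstellensatz route directly. Consider $F(x)=\prod_{i=1}^{n}\ell_i(x)$ and the points $x$ of the affine chart. The strong blocking property implies that, for any $x\neq 0$, the vanishing set $\{i:\ell_i(x)=0\}$ spans $x^\perp$. Using this, I would show that a suitable polynomial
\[Q(x)=\prod_{i}\ell_i(x)\cdot(\text{correction})\]
vanishes on all of $\mathbb{F}_q^k$ except at $0$, while some monomial $\prod_j x_j^{q-1}$ appears with nonzero coefficient. Comparing degrees via the Combinatorial Nullstellensatz would then force $n\ge (q-1)k+(k-1)\cdot 2-(q-1)=(q+1)(k-1)$. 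The delicate point is designing the correction term so that the spanning condition, rather than mere blocking, is what yields the extra $2(k-1)$. I expect this is done by differentiating, or by using $k-1$ independent directions inside each hyperplane.
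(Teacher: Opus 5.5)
There is a genuine gap: the proposal never proves the inequality. Your preliminary observations are correct: the pencil reformulation, the case $k=2$, and the fact that projection preserves the property. But they only give weak bounds, and the decisive step, the ``correction term'', is left unspecified. Moreover, the template you describe cannot work as stated. The $\ell_i$ are linear forms, so $\prod_i\ell_i$ vanishes at $0$. It also vanishes at every $x\neq 0$, because $M$ meets every hyperplane. Hence no polynomial multiple of it is nonzero at $0$. Even setting that aside, all the Nullstellensatz extracts from a $Q$ vanishing on $\mathbb{F}_q^k\setminus\{0\}$ but not at $0$ is $\deg Q\ge k(q-1)$. Since $\deg Q=n+\deg(\text{correction})$, this yields at most $n\ge k(q-1)$, which is below $(q+1)(k-1)$ as soon as $2k>q+1$. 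Your count $(q-1)k+2(k-1)-(q-1)$ is read off from the target and would require a correction of negative degree. The polynomial your pencil condition actually produces is $P_y(x)=\prod_{i:\,\ell_i(y)\neq 0}\ell_i(x)$, which is nonzero exactly on $\langle y\rangle\setminus\{0\}$. It gives $|M\setminus y^\perp|\ge (q-1)(k-1)+1$. Together with $|M\cap y^\perp|\ge k-1$ this yields only $n\ge q(k-1)+1$, still $k-2$ short.

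What is missing is a sharper use of your own pencil count. Instead of ``each of the $q+1$ hyperplanes through $W$ contains a point of $M\setminus W$'', use a uniform lower bound on hyperplane sections and choose $W$ carefully.
\begin{itemize}
\item Let $m=\min_H|M\cap H|$, attained at $H_0$. Since $M\cap H_0$ spans $H_0$, we have $m\ge k-1$, and $M\cap H_0$ contains $k-1$ points forming a basis $B$ of $H_0$.
\item Let $W\subset H_0$ be a codimension-$2$ space of $\PG(k-1,q)$ containing no point of $B$, for instance the kernel, inside $H_0$, of the functional equal to $1$ on each basis vector. Then $w:=|M\cap W|\le m-(k-1)$.
\item The $q+1$ hyperplanes through $W$ partition the points outside $W$, and each contains at least $m$ points of $M$ by minimality. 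Therefore
\[
n\ \ge\ w+(q+1)(m-w)\ =\ (q+1)m-qw\ \ge\ m+q(k-1)\ \ge\ (q+1)(k-1).
\]
\end{itemize}
This is exactly where the spanning condition, rather than mere blocking, enters: it supplies the basis $B$ inside a minimum section. No polynomial method is needed. The paper itself gives no proof of this statement and only cites the literature, so there is no in-paper argument to compare against.
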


Recently, this lower bound has been improved by using Delsarte's linear programming bound in coding theory.

\begin{theorem}[see   {\cite[Theorem 1.4]{Bishnoi2023trifference}, \cite[Theorem 3.3]{scotti2024lower}}]
    For any prime power $q$, there is a constant $c_q>1$ such that every strong blocking set in $\mathrm{PG}(k - 1, q)$ has size at least 
$(c_q-o(1))(q+1)(k-1)$.
\end{theorem}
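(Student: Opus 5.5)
The statement is cited from \cite{Bishnoi2023trifference} and \cite{scotti2024lower}, and the plan is to reduce it to a Delsarte-type upper bound on codes of large relative distance. Let $M$ be a strong blocking set of size $n$ in $\mathrm{PG}(k-1,q)$, and let $C$ be the associated nondegenerate $[n,k,d]_q$ code, which is minimal by the equivalence theorem above.

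\textbf{Step 1: lower bound on the distance.} I will show $d \ge (q-1)(k-1)+1$.
\begin{itemize}
\item Fix a hyperplane $H$. Since $\langle H\cap M\rangle = H$, the set $H\cap M$ spans a $(k-2)$-dimensional projective space.
\item Through any codimension-2 subspace there are exactly $q+1$ hyperplanes, and the corresponding codeword weights $n-|H\cap M|$ are linked through these pencils.
\item The easier route is the known fact that every minimal code satisfies $d\ge (q-1)(k-1)+1$, so that the minimum distance grows linearly in $k$ with slope $q-1$.
\end{itemize}
Note that $n\ge(q+1)(k-1)$ already follows from \cref{thm:strong_lower_bound}. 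The improvement therefore has to come from combining these facts with a bound on codes.

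\textbf{Step 2: rule out a near-tight code.} Suppose $n\le (1+\varepsilon)(q+1)(k-1)$ with $\varepsilon$ small. Then the rate satisfies $R\approx 1/(q+1)$, and the relative distance satisfies $\delta\ge (q-1)/((q+1)(1+\varepsilon))$, which is close to $(q-1)/(q+1)$. Minimality is in fact stronger than a distance bound. For every pair of linearly independent codewords $u,v$, the $q+1$ one-dimensional subspaces of $\langle u,v\rangle$ have supports that cover the union of the supports, with no support contained in another. This gives the more refined lower bound $wt(u)+wt(v)\ge \dots$. The simpler plan is:
\begin{itemize}
\item Use only $d/n \ge (q-1)/(q+1)-O(\varepsilon)$ together with $R\ge 1/((1+\varepsilon)(q+1))$.
\item Invoke the MRRW (linear programming) bound for $q$-ary codes, $R\le R_{LP}(\delta)$.
\item Check that $R_{LP}\big((q-1)/(q+1)\big) < 1/(q+1)$ strictly.
\end{itemize}
Since $R_{LP}$ is continuous, this yields a fixed $\varepsilon_q>0$, and we set $c_q=1+\varepsilon_q$.

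\textbf{Main obstacle.} The hardest step is verifying the strict inequality $R_{LP}\big((q-1)/(q+1)\big)<1/(q+1)$. I would first try the first MRRW bound evaluated at $\delta=(q-1)/(q+1)$, using
\[
R_{LP}(\delta)=H_q\!\left(\tfrac{1}{q}\big(q-1-(q-2)\delta-2\sqrt{(q-1)\delta(1-\delta)}\big)\right).
\]
At this $\delta$ the argument of $H_q$ is small, of order $1/q^2$ to first order. The entropy $H_q$ of such a value is then compared with $1/(q+1)$.

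If this comparison fails for small $q$, I would fall back on two alternatives:
\begin{itemize}
\item Use the weight-pair structure from Step 2 via a Delsarte LP tailored to minimal codes, as in \cite{scotti2024lower}.
\item Use the trifference/hashing argument of \cite{Bishnoi2023trifference}: for $q=2$, distinct minimal codewords give triples separated in some coordinate.
\end{itemize}
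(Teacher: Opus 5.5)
Your main line is correct and matches the route the paper points to: $d\ge (q-1)(k-1)+1$ for minimal codes, fed into the first MRRW (Delsarte LP) bound, then continuity. The paper gives no proof of its own; it only remarks that the improvement over \cref{thm:strong_lower_bound} comes from Delsarte's linear programming bound. Step~1, which you only cite, has a two-line proof you should include. Let $H$ be a hyperplane and $H'\neq H$ another one. Then $\langle H'\cap M\rangle=H'\not\subseteq H$ forces $M\cap(H'\setminus H)\neq\emptyset$. Hence $M\setminus H$ blocks every hyperplane of $\mathrm{AG}(k-1,q)=\mathrm{PG}(k-1,q)\setminus H$, and the Jamison/Brouwer--Schrijver bound gives $|M\setminus H|\ge (q-1)(k-1)+1$.

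The real gap is that you leave the decisive inequality unchecked, and your estimate of it is wrong. At $\delta=(q-1)/(q+1)$ one has $1-\delta=2/(q+1)$ and $\sqrt{(q-1)\delta(1-\delta)}=\sqrt2\,(q-1)/(q+1)$. So the argument of $H_q$ is exactly
\[
x_q=\frac{(3-2\sqrt2)(q-1)}{q(q+1)},
\]
which is of order $1/q$, not $1/q^2$. Since $H_q(x_q)\ge x_q\log_q(1/x_q)>x_q$, the value $H_q(x_q)$ is a fixed fraction of $1/(q+1)$, so the inequality is not automatic. It does hold with room:
\begin{itemize}
\item $H_2(x_2)\approx 0.187<1/3$, $H_3(x_3)\approx 0.136<1/4$, and $H_7(x_7)\approx 0.064<1/8$ (the cases $q=4,5$ are similar);
\item for $q\ge 8$, the bound $H_q(x)\le x\log_q(q-1)+x\log_q(e/x)$ gives $(q+1)H_q(x_q)<0.6$;
\item $(q+1)H_q(x_q)\to 2(3-2\sqrt2)\approx 0.34$ as $q\to\infty$.
\end{itemize}

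It is worth seeing why this one point matters. The pair $\bigl(\delta,R\bigr)=\bigl(\tfrac{q-1}{q+1},\tfrac{1}{q+1}\bigr)$ lies exactly on the asymptotic Plotkin line $R=1-\frac{q}{q-1}\delta$, which is why $(q+1)(k-1)$ is the natural barrier. So you need a bound strictly better than Plotkin at this $\delta$. A generic appeal to ``LP beats Plotkin'' would not do, because the closed-form MRRW bound is not below the Plotkin line on all of $(0,1-1/q)$ for larger $q$: near $\delta=0$ it behaves like $1-2\delta/\ln q$. The explicit evaluation at this $\delta$ is therefore required.

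With that computation added, your continuity argument completes the proof, and the fallbacks are unnecessary. Incidentally, the trifference connection concerns $q=3$, where linear trifferent codes are exactly the minimal codes, not $q=2$.
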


The best known existence results for the upper bounds on the size of the smallest strong blocking set have been proved via the probabilistic method for the cases when $q = 2$ in \cite{miklos1984linear} and for $q > 2$ in \cite{Bishnoi2023trifference,ABN2023}. 

The key idea behind these constructions, as well as many explicit constructions, is to consider points sets that are unions of lines in a projective space. 
If $\ell_1, \dots, \ell_m$ is the collection of lines, then their union forms a strong blocking set (of size at most $(q + 1)m$) if and only if there is no pair $(S, H)$ where $S$ is a codimension-$2$ subspace and $H$ is a hyperplane containing $S$ such that for all $i$, either $\ell_i \subseteq S$ or $\ell_i \cap H$ is a single point contained in $S$.
These constructions often satisfy the following stronger property, observed in \cite[Theorem 11]{fancsali2014lines}, which goes beyond the requirement that the collection of lines itself forms a strong blocking set.

\begin{definition}\label{def:line_avoidance}
    A set $\mathcal{L}$ of lines in $\mathrm{PG}(k - 1, q)$ is said to have the \textbf{avoidance property} if for all codimension-$2$ subspaces $W$, there exists a line $\ell \in \mathcal{L}$ such that $W \cap \ell = \emptyset$.
\end{definition}

\begin{theorem}\label{thm:avoid_prp}
If a set $\mathcal{L}$ of lines satisfies the avoidance property, then the point-set $\mathcal{B} = \cup_{\ell \in \mathcal{L}} \ell$ forms a strong blocking set. 
\end{theorem}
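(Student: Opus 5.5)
We argue by contraposition. Suppose $\mathcal{B}$ is not a strong blocking set. Then there is a hyperplane $H$ with $\langle H \cap \mathcal{B}\rangle \neq H$. We will produce a codimension-$2$ subspace $W$ that meets every line of $\mathcal{L}$, which contradicts the avoidance property of \cref{def:line_avoidance}.

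\emph{Choice of $W$.} The span $\langle H\cap\mathcal{B}\rangle$ is a proper subspace of $H$. Since every proper subspace of a hyperplane is contained in a hyperplane of $H$, we can choose a subspace $W$ of $H$ of codimension $1$ in $H$ with $H \cap \mathcal{B} \subseteq W$. Then $W$ has codimension $2$ in $\mathrm{PG}(k-1,q)$. This choice also covers the degenerate case $H\cap\mathcal{B}=\emptyset$, where any such $W$ works.

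\emph{$W$ meets every line.} Let $\ell\in\mathcal{L}$. In $\mathrm{PG}(k-1,q)$ a line and a hyperplane always intersect, by the dimension formula: the $2$-dimensional and $(k-1)$-dimensional vector subspaces of $\mathbb{F}_q^k$ meet in dimension at least $1$. So $\ell\cap H$ contains a point $P$. Since $\ell\subseteq\mathcal{B}$, we have $P\in H\cap\mathcal{B}\subseteq W$, and therefore $\ell\cap W\neq\emptyset$. As $\ell$ was arbitrary, no line of $\mathcal{L}$ is disjoint from $W$. This contradicts the avoidance property, so $\langle H\cap\mathcal{B}\rangle = H$ for every hyperplane $H$, and $\mathcal{B}$ is a strong blocking set.

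There is no real obstacle here. The only points needing care are the existence of a codimension-$2$ space $W$ inside $H$ that contains the proper subspace $\langle H\cap\mathcal{B}\rangle$, and the fact that lines always meet hyperplanes. Both are standard linear algebra in $\mathbb{F}_q^k$.
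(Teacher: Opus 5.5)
Your proof is correct and follows the same route as the paper's: by contraposition, you take a hyperplane $W$ of $H$ containing $H\cap\mathcal{B}$ as the codimension-$2$ space, and use the fact that every line of $\mathcal{L}$ meets $H$ to conclude that $W$ meets every line, contradicting the avoidance property. Your explicit treatment of the degenerate case $H\cap\mathcal{B}=\emptyset$ and your dimension-count justification that lines meet hyperplanes are small additions that the paper leaves implicit.
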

\begin{proof}
Let $\mathcal{L}$ be the set of lines and let $\mathcal{B} = \cup_{\ell \in \mathcal{L}}\ell$, and let us proof the contrapositive argument and suppose that $\mathcal{B}$ is not a strong blocking set. That means that there exists a hyperplane $H_{\Pi}$ such that $\mathcal{B} \cap H_{\Pi}$ does not span $H_{\Pi}$. In particular, $\mathcal{B} \cap H_{\Pi}$ is contained in a hyperplane $H$ of $H_{\Pi}$. Since we know that $H_{\Pi}$ is a hyperplane, it meets every line of the projective space. Thus $\ell \cap H_{\Pi} \neq \emptyset$ for all lines $\ell \in \mathcal{L}$, but since $\ell \subseteq \mathcal{B}$ and $\mathcal{B} \cap H_{\Pi} \subseteq H$, it follows that $\ell \cap H \neq \emptyset$. That is, $H$ is a codimension 2-subspace meeting every line $\ell \in \mathcal{L}$ and so that will imply $\mathcal{L}$ does not have the avoidance property.
\end{proof}

We will use this avoidance property in our construction as well. 

\subsection{Expander graphs}
Throughout our constructions, we will use explicit constructions of constant-degree expander graphs, as this general framework has been used by \cite{ABDN2023}. The informal way for someone to think about expander graphs is that the edges of those graphs are very well spread out, ensuring that there are many outgoing edges from all vertex subsets that are not too large. For further properties and applications we refer the reader to the survey \cite{hlw2006} for a formal definition and for a variety of applications of expander graphs in different areas of mathematics and computer science. 

Graph expansion can be studied by the spectral properties of the graph. Given an $n-$vertex graph $G$, we denote the eigenvalues of its adjacency matrix to be in the following decreasing order: $\lambda_{1} \geq \lambda_{2} \ldots \geq \lambda_{n}$. The eigenvalues of a graph encode substantial information about its underlying structure. In particular, the second-largest eigenvalue of the adjacency matrix provides insight into the connectivity and expansion properties of the graph.
For instance, If $G$ is a connected $t-$regular graph, then its largest eigenvalue $\lambda_{1} = t$, while the second-largest eigenvalue satisfies $\lambda_{2} < t$. 

\begin{definition}[Expander Graph]
A graph $G$ is called an $(n,t,\lambda)$-graph if it is a  $t$-regular connected graph on $n$ vertices with  $|\lambda_{i}| \leq \lambda$ for all $i \geq 2$.
\end{definition}

In our analysis, we will make use of  the \textit{Expander Mixing Lemma}. This lemma is often attributed to Alon and Chung \cite{alon1988explicit}, but it appears about 10 years earlier in the PhD thesis of Haemers \cite{Haemers1979}.

\begin{lemma}[Expander mixing lemma]\label{lem:exp_mix_tight}
Let $G$ be a $t$-regular $n$-vertex graph. Let $\lambda_{1} \geq \lambda_{2} \geq \lambda_{3} \geq \ldots \geq \lambda_{n}$ to be the eigenvalues of the adjacency matrix of $G$ and we define $\lambda = \max\{|\lambda_{2}|,|\lambda_{n}|\}$. Let $S,T \subset V(G)$ and disjoint. Denote by $e(S,T)$ to be the number of pairs $(x,y) \in S \times T$ such that $xy$ is an edge of $G$, then it holds,
$$\big|e(S,T)-\frac{t}{n}|S||T|\big| \leq \lambda \sqrt{|S||T|\Big(1-\frac{|S|}{n}\Big)\Big(1-\frac{|T|}{n}\Big)}$$ 
where $\lambda$ is the largest absolute value of an eigenvalue other than $\lambda_{1}$.
\end{lemma}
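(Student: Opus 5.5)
The plan is the standard spectral argument. Let $A$ be the adjacency matrix of $G$, and let $\mathbf{1}$ denote the all-ones vector. Since $G$ is $t$-regular, $A\mathbf{1}=t\mathbf{1}$. Because $A$ is real symmetric, we can pick an orthonormal eigenbasis $v_1=\mathbf{1}/\sqrt{n}, v_2,\dots,v_n$ with $Av_i=\lambda_i v_i$. Write $\chi_S,\chi_T$ for the indicator vectors of $S$ and $T$. Then $e(S,T)=\chi_S^{\top}A\chi_T$; the count is of ordered pairs in $S\times T$, and disjointness is not even needed for this identity.

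First decompose $\chi_S=\frac{|S|}{n}\mathbf{1}+x$ and $\chi_T=\frac{|T|}{n}\mathbf{1}+y$, where $x,y\perp\mathbf{1}$. The vector $A\mathbf{1}=t\mathbf{1}$ is orthogonal to $y$, and $A$ preserves $\mathbf{1}^{\perp}$, so the cross terms vanish. This gives
\[
e(S,T)=\frac{t}{n}|S||T|+x^{\top}Ay .
\]
Next expand $x=\sum_{i\ge 2}a_iv_i$ and $y=\sum_{i\ge2}b_iv_i$. Then $|x^{\top}Ay|=|\sum_{i\ge2}\lambda_i a_ib_i|\le \lambda\sum_{i\geq 2}|a_ib_i|\le \lambda\|x\|\|y\|$ by Cauchy--Schwarz, where $\lambda=\max_{i\ge2}|\lambda_i|=\max\{|\lambda_2|,|\lambda_n|\}$.

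Finally compute the norms by Pythagoras:
\[
\|x\|^2=\|\chi_S\|^2-\frac{|S|^2}{n^2}\|\mathbf{1}\|^2=|S|-\frac{|S|^2}{n}=|S|\Bigl(1-\frac{|S|}{n}\Bigr),
\]
and similarly $\|y\|^2=|T|(1-|T|/n)$. Substituting these into the previous bound gives exactly the claimed inequality.

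There is no real obstacle here. The only points needing care are, first, that the orthogonal decomposition kills the cross terms, which uses regularity so that $\mathbf{1}$ is an eigenvector, and second, that the error term is bounded by $\lambda$ rather than by $\lambda_2$ alone, which is why the absolute values of both extreme nontrivial eigenvalues enter.
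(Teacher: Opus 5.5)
Your argument is correct and is the standard spectral proof. The paper does not prove the lemma itself but defers to \cite[Lemma 4.15]{vadhan2012pseudorandomness}, whose proof is exactly this decomposition of $\chi_S,\chi_T$ along $\mathbf{1}$ and $\mathbf{1}^{\perp}$, followed by Cauchy--Schwarz on the nontrivial eigencoordinates. One detail is worth stating explicitly: $t=\lambda_1$, since every eigenvalue of a $t$-regular graph has absolute value at most $t$, and this is what guarantees that the eigenvalues of $A$ on $\mathbf{1}^{\perp}$ are exactly $\lambda_2,\dots,\lambda_n$, even when $t$ has multiplicity greater than one.
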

A detailed proof of the aforementioned lemma can be found in \cite[Lemma 4.15]{vadhan2012pseudorandomness}. 
It is worth noting that the error term on the right-hand side is directly proportional to $\lambda$, and it is natural to make this parameter as small as possible. The Alon--Boppana bound \cite{nilli1991second} places a fundamental limitation on how small ($\lambda$) can be. 
It states that for any infinite family of $(n, t, \lambda)$-graphs, $\lambda \geq 2 \sqrt{t - 1} - o(1)$ as $n \rightarrow \infty$.
This naturally leads to the definition of Ramanujan graphs, which constitute one of the most prominent families of expander graphs.

\begin{definition}
Let $G$ be a $t-$regular graph with eigenvalues $t = \lambda_{1} \geq \lambda_{2} \geq \ldots \geq \lambda_{n}$. If $\max\{|\lambda_{i}|: |\lambda_{i}| < t \} \leq 2\sqrt{t-1}$, then $G$ is said to be a \textbf{Ramanujan graph}.
\end{definition}

\begin{remark}
In the context of Ramanujan graphs, Lubotsky, Phillips and Sarnak ~\cite{lubotzky1988ramanujan} and Margulis ~\cite{margulis1988} gave explicit constructions of an $(n,p+1,2\sqrt{p})$ Ramanujan graphs for $n = \Theta(q^{3})$, where $p$ is a prime such that $p \equiv 1 \pmod{4}$ using linear algebraic groups.
\end{remark}
The above constructions can produce good $(n, d, \lambda)$-graphs with restricted range of values in $n$ and $d$. If we slightly relax the requirement that $\lambda \leq 2\sqrt{t-1}$, we can find explicit constructions for every degree d and every large enough n.

\begin{theorem}[\cite{alon2021explicit}, Theorem 1.3]\label{thm:bound_2}
For every positive integer $t$, and every $\varepsilon > 0$, there is an $n_{0}(t,\varepsilon)$ such that for all $n \geq n_{0}(t,\varepsilon)$ with $nt$ to be even, there is an explicit construction of an $(n,t,\lambda)-$graph $G^{\varepsilon}_{n,t}$ with $\lambda \leq 2\sqrt{t-1} + \varepsilon$
\end{theorem}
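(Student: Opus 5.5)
The plan is to take an explicit near-Ramanujan $t$-regular graph whose number of vertices is only close to $n$, and then change the vertex count to exactly $n$ by a sparse set of local surgeries. The surgeries are placed far apart, so that each one only perturbs the spectrum in a controlled, local way. This follows the strategy of \cite{alon2021explicit}.

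\textbf{Step 1: a base family.} First fix a base family of explicit $(m,t,2\sqrt{t-1}+\varepsilon/2)$-graphs $H_m$ that exists for every degree $t$ and for a set of sizes $m$ dense enough that consecutive admissible sizes have ratio tending to $1$.
\begin{itemize}
\item For $t-1$ a prime $\equiv 1 \pmod 4$, the LPS/Margulis graphs of the Remark give such a family.
\item For general $t$, I would use derandomized random lifts of a small base graph, in the style of Mohanty--O'Donnell--Paredes. Lifts of a fixed base graph of size $b$ produce all sizes that are multiples of $b$. Choosing a base of suitable size gives a family $H_m$ with $m\in[n,(1+\delta)n]$ for any prescribed $\delta>0$ and all large $n$.
\end{itemize}

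\textbf{Step 2: the surgery.} Next reduce the number of vertices from $m$ to exactly $n$ while keeping the graph $t$-regular. Pick $r=m-n\le \delta n$ small vertex-gadgets, each located in a ball whose pairwise distances exceed a large constant $R=R(t,\varepsilon)$. This is possible greedily, since balls of radius $R$ have constant size. In each gadget we delete a constant number of vertices and rewire the freed half-edges among nearby vertices so that every degree returns to $t$. Parity of $nt$ is exactly the condition needed for the half-edges to be matched up. The resulting graph $G$ differs from $H_m$ only inside $r$ well-separated balls of bounded radius, and it is explicit.

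\textbf{Step 3: the spectral bound (main obstacle).} The heart of the proof is showing that every nontrivial eigenvalue of $G$ has absolute value at most $2\sqrt{t-1}+\varepsilon$.

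Take a unit eigenvector $f\perp\mathbf 1$ of $G$ with eigenvalue $\mu$. Split $f$ into two parts:
\begin{itemize}
\item $f_{\mathrm{far}}$, supported at distance more than $R/2$ from all gadgets;
\item $f_{\mathrm{near}}$, the remainder.
\end{itemize}
On $f_{\mathrm{far}}$ the graphs $G$ and $H_m$ coincide, so the Rayleigh quotient is controlled by the spectrum of $H_m$. The small correction from projecting onto $\mathbf 1$ can be absorbed because $r/m$ is small.

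For $f_{\mathrm{near}}$ I would use a Kahale-type argument. If $|\mu|>2\sqrt{t-1}+\varepsilon$, then an eigenvector with that eigenvalue must decay exponentially, at a rate depending only on $\varepsilon$, away from any bounded-size non-tree-like region. Choosing $R$ large therefore makes the interaction between different gadgets, and between gadgets and the bulk, negligible. This reduces the problem to a local one: each gadget, viewed as a finite graph with infinite $(t-1)$-ary trees hanging off its boundary, must have spectral radius at most $2\sqrt{t-1}+\varepsilon/2$.

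That local check is where the gadget design matters. I would first try the simplest choice, deleting a single edge-path and reconnecting its endpoints, which keeps the local structure a tree plus one short cycle. Such a structure has spectral radius exactly $2\sqrt{t-1}$ once the cycle is longer than a constant. If this is too tight, I would lengthen the gadgets, again with $R$ chosen accordingly.

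Making the decay estimate uniform over all $r$ gadgets is the step I expect to be the hardest. Once it is done, combining the near and far parts via the triangle inequality gives $\lambda\le 2\sqrt{t-1}+\varepsilon$ for all $n\ge n_0(t,\varepsilon)$.
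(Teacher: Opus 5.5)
The paper does not prove this statement; it quotes it from Alon~\cite{alon2021explicit}. That paper, as you anticipate, starts from the explicit near-Ramanujan graphs of Mohanty--O'Donnell--Paredes, which do not have exactly $n$ vertices. It modifies them locally and controls the spectrum through delocalization of eigenvectors in cycle-free neighbourhoods.

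Your architecture is right, but the spectral step, which you correctly flag as the heart of the matter, rests on a false claim. An eigenvector with $|\mu|>2\sqrt{t-1}+\varepsilon$ need not decay away from the gadgets. On a tree-like annulus the eigenvalue equation has an outward-growing and an outward-decaying radial mode, and in a finite graph nothing forces the growing mode, fed from the bulk, to vanish.

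The true statement goes the other way. Suppose $B(v,R)$ is a tree and $|\mu|\ge 2\sqrt{t-1}$. The sphere sums $s_i=\sum_{u\in S_i(v)}f(u)$ (so $s_0=f(v)$) satisfy $s_1=\mu s_0$, $s_2=\mu s_1-ts_0$, and $s_{i+1}=\mu s_i-(t-1)s_{i-1}$ for $2\le i<R$. Hence $|s_i|/(t-1)^{i/2}$ is nondecreasing. Cauchy--Schwarz on each sphere then gives $\sum_{u\in S_i(v)}f(u)^2\ge\frac{t-1}{t}f(v)^2$ for $1\le i\le R$, and therefore $f(v)^2\le\frac{t}{(t-1)R}\,\|f|_{B(v,R)}\|^2$. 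Large-eigenvalue eigenvectors put \emph{little} mass at the centre of a cycle-free ball, and that is what makes the surgeries negligible.

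Your combination step also fails as written. With a sharp split $f=f_{\mathrm{near}}+f_{\mathrm{far}}$, the cross term $2\langle f_{\mathrm{far}},Af_{\mathrm{near}}\rangle$ lives on the cut layer and is not small a priori. The triangle inequality therefore does not close the argument; you would need a smooth cutoff or a pigeonholed cut radius.

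Once you use delocalization, Step~3 takes a few lines, and the uniformity over gadgets that you feared is automatic. Choose gadgets that keep balls cycle-free. At a site whose ball of radius $3R$ in $H_m$ is a tree, delete one vertex ($t$ even) or two adjacent vertices ($t$ odd), and pair up the freed half-edges; every cycle through a new edge then has length more than $2R$. Your ``delete an edge-path and reconnect its endpoints'' does not restore $t$-regularity, since the interior path vertices have other neighbours.

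Now extend an eigenvector $f\perp\mathbf{1}$ of $G$ by zero to $V(H_m)$. It is then exactly orthogonal to the all-ones vector of $H_m$. Moreover, $\langle f,A_{H_m}f\rangle$ differs from $\mu\|f\|^2$ only by the contribution of the new edges, which is at most $\sum f(u)^2$ over the $O(t)$ rewired vertices per site. The $R$-balls around different sites are disjoint, so delocalization in $G$ bounds this by $O(\|f\|^2/R)$, giving $|\mu|\le\lambda(H_m)+O(1/R)$.

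This requires Step~1 to supply the cycle-free sites explicitly, through large girth or few short cycles in the base family; you only used this implicitly. No spectral-radius analysis of ``gadget plus infinite trees'' is needed.
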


\section{New parameter and analysis}\label{sec:intro_param}
In \cite{ABDN2023}, the authors used the vertex integrity of a graph to obtain upper bounds on the size of a strong blocking set constructed from expander graphs. 
We first provide the definition of vertex integrity that measures the vulnerability of the graph under vertex deletions, essentially that means how easy it is to break the graph into small pieces.

\begin{definition}
Let $G = (V,E)$ be a simple connected graph. 
The \textit{integrity} of the graph $G$ is the integer 
$$\iota(G) \coloneqq \min_{{S \subseteq V}} \{|S| + \max_{D \in cc(G-S)}|D|\},$$
where $cc(G - S)$ is the set of connected components in the graph obtained by deleting $S$ from $G$.
\end{definition}

The new graph parameter as defined in \Cref{def:new_parameter} is motivated by the line avoidance property~\ref{def:line_avoidance} that we want to guarantee and the fact that the hyperplanes through any fixed codimension-$2$ subspace $S$ in $\mathrm{PG}(k-1, q)$  partition $\mathrm{PG}(k-1, q) \setminus S$ into $q + 1$ parts. 
If a collection $\mathcal{L}$ of lines does not have the avoidance property, then there exists a codimension-$2$ subspace $W$ such that all lines in our collection $\mathcal{L}$ intersect $W$.
Therefore, among the $q + 1$ hyperplanes $H_1, \dots, H_{q+1}$ containing $W$, there is no pair $H_i, H_j$ with a line $\ell \in \mathcal{L}$ that contains a point of $H_i \setminus W$ and a point of $H_j \setminus W$.
We define a graph parameter that captures this property and then we prove in \cref{lem:bound_param} that this new parameter is stronger than vertex integrity.

\begin{definition}\label{def:new_parameter}
    For a graph $G$ and integer $q \geq 2$, let 
    \[f_q(G) \coloneqq \min_{{S \subseteq V}} \{|S| + \max_i |C_i|: S \sqcup C_{1} \sqcup \cdots \sqcup C_{q+1} = V(G), e(C_i, C_j) = 0 ~\forall i \neq j\}.\]
\end{definition}

\begin{remark}
The definition is sensible for any integer $q \geq 1$, but for us $q$ will be a prime power and thus $q \geq 2$ is a fair assumption.
\end{remark}

\begin{lemma}\label{lem:bound_param}
    For every every graph $G$, 
    $f_q(G) \geq \iota(G)$. 
    
\end{lemma}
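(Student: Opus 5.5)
The plan is to take an optimal partition for $f_q(G)$ and turn it into a feasible choice of $S$ for the minimization defining $\iota(G)$, with an objective value no larger. Then $\iota(G)$, being the minimum over all $S$, is at most $f_q(G)$.

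Fix a partition $V(G) = S \sqcup C_1 \sqcup \cdots \sqcup C_{q+1}$ attaining the minimum in \Cref{def:new_parameter}. The minimum exists because the set of such partitions is finite and nonempty; for instance $S = V(G)$ with all $C_i = \emptyset$ is admissible. For this partition,
\[
f_q(G) = |S| + \max_i |C_i|.
\]
We use the same set $S$ in the definition of $\iota(G)$.

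The key step is to show that every connected component $D$ of $G - S$ lies inside a single part $C_i$. Since $D \subseteq V \setminus S = \bigsqcup_i C_i$, we argue by contradiction. Suppose $D$ meets two different parts $C_i$ and $C_j$. Take a path in $D$ from a vertex of $C_i$ to a vertex of $C_j$. Every vertex on this path lies in some $C_\ell$, so somewhere along the path there is an edge whose endpoints lie in different parts $C_a$ and $C_b$ with $a \neq b$. This contradicts the condition $e(C_a, C_b) = 0$. Hence $D \subseteq C_i$ for some $i$, so $|D| \le \max_i |C_i|$.

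Combining these observations gives
\[
\iota(G) \;\le\; |S| + \max_{D \in cc(G-S)} |D| \;\le\; |S| + \max_i |C_i| \;=\; f_q(G).
\]
When $G - S$ is empty, the maximum over components is read as $0$, and the inequality still holds. The argument uses neither connectivity of $G$ nor any property of $q$. The only step needing care is the path argument above, and it is elementary, so I do not expect a real obstacle.
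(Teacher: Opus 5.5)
Your proof is correct and follows essentially the same route as the paper: take an optimal partition for $f_q(G)$, reuse its set $S$ in the definition of $\iota(G)$, and observe that every connected component of $G-S$ lies inside a single $C_i$. Your path argument makes explicit the step the paper states as a bare observation, and you also handle the existence of the minimum and the empty case.
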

\begin{proof}
Take any subset $S \subseteq V(G)$ for which it holds that $f_{q}(G) = |S| + \max_{i=1,\ldots,q+1}|C_{i}|
$ and let $C_{1},C_{2},\ldots,C_{q+1}$ to be $q+1$ disjoint sets of size $|C_{i}| = c_{i}$ with no edges between them.  We can observe that for such a subset $S$, every connected component of $G -S$ must be contained in one of the $C_{i}$'s for $i = 1,2,\ldots,q+1$, hence the maximum size of any connected component of $G-S$ is at most $\max_{i = 1,\ldots,q+1}|C_{i}|$. Because the vertex integrity is the minimum over all subset $S$ we get that  $\iota(G) \leq |S| + \max_{i = 1,\ldots,q+1}|C_{i}| = f_{q}(G)$ and the desired inequality follows.
\end{proof}

In \cite{ABDN2023}, the authors established the following spectral lower bound in terms of vertex integrity.

\begin{lemma}
For any $(n,t,\lambda)$-graph $G$, we have that $\iota(G) \geq \left(\frac{t-\lambda}{t+\lambda}\right)n.$
\end{lemma}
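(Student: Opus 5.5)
Fix an arbitrary $S \subseteq V(G)$ and let $m$ be the size of the largest connected component of $G - S$. We want to show $|S| + m \geq \left(\frac{t-\lambda}{t+\lambda}\right)n$. If $|S| \geq \frac{t-\lambda}{t+\lambda} n$ we are done, so assume otherwise and write $s = |S|$.

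The key step is to split the components of $G - S$ into two groups. Since every component has size at most $m$, we can greedily collect components into a set $A$ until $|A|$ first exceeds $(n-s-m)/2$. Then $A$ and $B = V \setminus (S \cup A)$ are both unions of whole components, and each has size at least roughly $(n-s-m)/2$. More precisely, $|A| \geq (n-s-m)/2$ and $|B| \geq (n-s-m)/2$, because the overshoot is at most $m$. Since $A$ and $B$ are unions of components of $G-S$, we have $e(A,B) = 0$.

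Now apply the Expander mixing lemma (\cref{lem:exp_mix_tight}) to the disjoint sets $A$ and $B$. With $a = |A|$ and $b = |B|$, it gives
\[ \frac{t}{n}ab \leq \lambda\sqrt{ab\left(1-\frac{a}{n}\right)\left(1-\frac{b}{n}\right)}, \]
that is,
\[ t\sqrt{ab} \leq \lambda\sqrt{(n-a)(n-b)}. \]

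We then use $a + b = n - s$. Among pairs with a fixed sum, $ab/((n-a)(n-b))$ is minimized at the extreme allowed split, so the worst case is $a = (n-s-m)/2$ and $b = (n-s+m)/2$. The cleaner route is AM–GM in the form
\[ \sqrt{(n-a)(n-b)} \leq \frac{2n-a-b}{2} = \frac{n+s}{2}, \]
together with a lower bound on $\sqrt{ab}$. Rather than lower-bounding $\sqrt{ab}$ directly, the main computation I would do is to use $e(A,B)=0$ more carefully through the form $t\,ab \leq \lambda \sqrt{ab(n-a)(n-b)}$. Rearranging this to
\[ t^2 ab \leq \lambda^2 (n-a)(n-b) \]
and substituting the extreme values of $a$ and $b$ should yield an inequality that is linear-izable in $s + m$. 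The target is
\[ t(n - s - m) \leq \lambda(n + s + m), \]
which rearranges exactly to $s + m \geq \frac{t-\lambda}{t+\lambda}n$.

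I expect this final algebraic step to be the main obstacle: getting precisely the constant $\frac{t-\lambda}{t+\lambda}$ rather than a weaker one. If the balanced-split estimate is too lossy, the fallback is a refined mixing inequality of the form
\[ e(A,B) \geq \frac{t-\lambda}{n}\,|A|\,|B| - \text{(correction)}, \]
or the version $\lambda \geq t\cdot\frac{\sqrt{ab}}{\sqrt{(n-a)(n-b)}}$ combined with concavity. In either case the worst case should be the balanced split, where $a \approx b \approx (n-s)/2$ up to $m$, so that
\[ \frac{\sqrt{ab}}{\sqrt{(n-a)(n-b)}} \geq \frac{n-s-m}{n+s+m}, \]
giving the claimed bound after optimizing over $S$.
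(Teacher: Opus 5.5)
Your bipartition argument is correct. The step you flagged as the main obstacle closes in one line using two ingredients you already wrote down. Since $a,b\ge\frac{n-s-m}{2}$, you have $\sqrt{ab}\ge\min(a,b)\ge\frac{n-s-m}{2}$. AM--GM gives $\sqrt{(n-a)(n-b)}\le\frac{n+s}{2}$. So the mixing inequality $t\sqrt{ab}\le\lambda\sqrt{(n-a)(n-b)}$ yields
\[
t(n-s-m)\le\lambda(n+s)\le\lambda(n+s+m),
\]
which is your target. No optimization over the split is needed and no fallback is required. Two small points:
\begin{itemize}
\item Your last paragraph calls the balanced split the worst case. This contradicts your earlier, correct, remark that $ab/((n-a)(n-b))$ is smallest at the most unbalanced allowed split. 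The bound above holds for every allowed split, so nothing depends on it.
\item Treat the trivial case $n-s-m\le 0$ separately, so that $a,b>0$ when you divide by $\sqrt{ab}$.
\end{itemize}

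The paper does not reprove this lemma; it is quoted from earlier work. However, the computation in the proof of \cref{lem:new_bound} contains it. Bound $e(C)$ from below and each $e(C_i)$ from above by the mixing lemma, with the $C_i$ the components of $G-S$, and sum. This gives $(t+\lambda)s+(t-\lambda)m\ge(t-\lambda)n$, i.e.\ $(t+\lambda)(s+m)\ge(t-\lambda)n+2\lambda m$.

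Your route is more elementary in one respect. It needs the mixing lemma only for two disjoint sets, exactly as stated in \cref{lem:exp_mix_tight}, whereas the paper's computation applies it with both sets equal.

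The price is a weaker intermediate inequality. Your $t(n-s-m)\le\lambda(n+s)$ is equivalent to $(t+\lambda)(s+m)\ge(t-\lambda)n+\lambda m$, which keeps only half of the slack term. For the integrity bound this slack is discarded, so both routes give the same constant $\frac{t-\lambda}{t+\lambda}$. It is precisely the $2\lambda m$ term, however, that \cref{lem:new_bound} converts via $c_1\ge(n-f)/q$ into the $2\lambda/q$ improvement. Your estimate in this form would only yield $\lambda/q$ there.
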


The following lemma provides a spectral lower bound on $f_q(G)$, yielding a strict improvement over the bound obtained via vertex integrity.

\begin{lemma}\label[lemma]{lem:new_bound}
For any $(n,t,\lambda)$-graph $G$, we have that 
\begin{equation}\label{eq:main_bound}
    f_q(G) \geq  \left(\frac{t - \lambda + 2\lambda/q}{t + \lambda + 2\lambda/q}\right)n.
\end{equation}
\end{lemma}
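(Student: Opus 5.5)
The plan is to fix an optimal decomposition $V(G)=S\sqcup C_1\sqcup\cdots\sqcup C_{q+1}$ with no edges between distinct $C_i$, and to derive an inequality between $s=|S|$, $c=\max_i|C_i|$ and $u=n-s=\sum_i|C_i|$. Minimising $s+c$ subject to that inequality should then give \eqref{eq:main_bound}. The vertex-integrity argument would apply the expander mixing lemma (\cref{lem:exp_mix_tight}) to each pair $(C_i, V\setminus C_i)$ separately. With the complementary sets $A, V\setminus A$ the error term collapses to $\lambda|A||V\setminus A|/n$, so $e(A,V\setminus A)\ge (t-\lambda)|A|(n-|A|)/n$. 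Summing over $i$ only recovers the $\iota$-type bound $\left(\frac{t-\lambda}{t+\lambda}\right)n$. To see the $q$-dependence one must use all $q+1$ parts simultaneously, and for this I would use a vector-valued test function.

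Choose unit vectors $v_1,\dots,v_{q+1}\in\mathbb{R}^q$ forming a regular simplex, so that $\langle v_i,v_j\rangle=-1/q$ for $i\neq j$. Define $X:V\to\mathbb{R}^q$ by $X(x)=v_i$ for $x\in C_i$ and $X(x)=0$ for $x\in S$. Consider the Laplacian energy $\sum_{xy\in E}\|X(x)-X(y)\|^2$. Edges inside a $C_i$ or inside $S$ contribute $0$, and there are no edges between distinct $C_i$. Each edge between some $C_i$ and $S$ contributes exactly $1$. Hence the energy equals $e(U,S)\le ts$, where $U=\bigcup_i C_i$. For the lower bound, I apply coordinatewise the fact that the Laplacian $tI-A$ has all eigenvalues on $\mathbf 1^\perp$ at least $t-\lambda$. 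This gives energy at least $(t-\lambda)\bigl(\sum_x\|X(x)\|^2-\tfrac1n\|\sum_x X(x)\|^2\bigr)$. Here $\sum_x\|X(x)\|^2=u$, and
\[
\Bigl\|\sum_i c_i v_i\Bigr\|^2=\Bigl(1+\tfrac1q\Bigr)\sum_i c_i^2-\tfrac1q u^2 \le \Bigl(1+\tfrac1q\Bigr)cu-\tfrac{u^2}{q}.
\]
Combining the two bounds yields the key inequality
\[
ts\ \ge\ (t-\lambda)\,u\Bigl(1-\Bigl(1+\tfrac1q\Bigr)\tfrac{c}{n}+\tfrac{u}{qn}\Bigr),\qquad u=n-s .
\]
The positive term $u/(qn)$ is where the gain over vertex integrity comes from: it is absent when one treats the parts one at a time.

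The final step, which I expect to be the main obstacle, is the optimisation. Write $s+c=f$, substitute $c=f-s$ and $u=n-s$, and show that the inequality forces $f\ge \frac{t-\lambda+2\lambda/q}{t+\lambda+2\lambda/q}\,n$. This means minimising over $s$ a quadratic constraint. I would first check the extreme/balanced configuration $c_1=\dots=c_{q+1}$ to see which value of $s$ is critical. If the resulting constant does not match \eqref{eq:main_bound} exactly, I would sharpen the lower bound. One option is to apply the spectral bound to the adjacency quadratic form directly, $\sum_x \|X(x)\|^2 t - 2\sum_{xy}\langle X(x),X(y)\rangle$, and bound $\langle \bar X, A\bar X\rangle\ge -\lambda\|\bar X\|^2$ on the mean-free part using $|\lambda_n|\le\lambda$ rather than only $\lambda_2$. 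The other is to combine the inequality above with the single-part bounds $e(C_i,S)\ge (t-\lambda)c_i(n-c_i)/n$. The $2\lambda/q$ shape of the target suggests that the $(1+1/q)$ and $1/q$ coefficients must be balanced with the two-sided eigenvalue bound. I would adjust the simplex test function, for example by giving $S$ a nonzero constant vector, until the algebra closes.
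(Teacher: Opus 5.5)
\textbf{There is a gap: the inequality you derive cannot yield \eqref{eq:main_bound}.} Your simplex test function and the energy lower bound are correct. The loss is in the step $e(U,S)\le ts$, and none of your proposed sharpenings touches it.

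To see this, put $g=n-f=u-c$. Your inequality becomes $ts\ge(t-\lambda)u\bigl(1-\frac{u}{n}+\frac{q+1}{q}\cdot\frac{g}{n}\bigr)$, and $c\ge u/(q+1)$ becomes $u\ge\frac{q+1}{q}g$. Left side minus right side is a convex quadratic in $u$ that is negative at $u=n$. So the system is feasible exactly when it holds at $u=\frac{q+1}{q}g$, the balanced configuration $c_1=\dots=c_{q+1}$. That happens exactly when $\frac{q+1}{q}g\le\frac{t}{2t-\lambda}n$. Hence the optimisation you postponed ends at
\[
f_q(G)\ge\Bigl(1-\frac{q}{q+1}\cdot\frac{t}{2t-\lambda}\Bigr)n .
\]
This is strictly weaker than \eqref{eq:main_bound} whenever $\lambda<\frac{q}{2(q+1)}t$. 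As $\lambda\to0$ it tends to $\frac{q+2}{2(q+1)}n$ instead of $n$, so it does not even recover the vertex-integrity bound. For $q=9$, $t=73$, $\lambda=2\sqrt{72}$ (the paper's $q=9$ row) it gives about $0.49n$ instead of about $0.64n$.

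Your proposed fixes do not address this step:
\begin{enumerate}
\item Applying $\lambda_n\ge-\lambda$ to $X$ itself only bounds $e(U,S)$ above by $(t+\lambda)$ times the same variance $u-\frac{1}{n}\|\sum_i c_i v_i\|^2$ that appears in your lower bound. So it gives no information on $s$.
\item The single-part bounds on $e(C_i,S)$ are again lower bounds.
\item Moving $S$ to a nonzero vector still leaves the energy bounded above only through $e(U,S)\le ts$.
\end{enumerate}

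\textbf{The missing idea} is to use $\lambda_n\ge-\lambda$ on the indicator of $U$ rather than on $X$. Since $U$ and $S$ are complementary,
\[
e(U,S)=\langle\mathbf{1}_U,(tI-A)\mathbf{1}_U\rangle\le(t+\lambda)\frac{us}{n},
\]
with $A$ the adjacency matrix. This is \cref{lem:exp_mix_tight} for the pair $(U,S)$, and it is exactly the paper's lower bound on $e(C)$ rewritten.

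Use this in place of $ts$, divide by $u/n$, and use $\sum_i c_i^2\le cu$. Your inequality becomes $(t+\lambda)s\ge(t-\lambda)\bigl(s+\frac{q+1}{q}g\bigr)$, that is, $2\lambda s\ge(t-\lambda)\frac{q+1}{q}g$. Together with $s\le n-\frac{q+1}{q}g$ this gives
\[
f_q(G)\ge\Bigl(1-\frac{2\lambda q}{(q+1)(t+\lambda)}\Bigr)n .
\]
This is at least \eqref{eq:main_bound}, because $(q+1)(t+\lambda)\ge q(t+\lambda)+2\lambda$ for $\lambda\le t$. You may always replace $\lambda$ by $\min(\lambda,t)$, since the right-hand side of \eqref{eq:main_bound} decreases in $\lambda$.

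Once repaired, your route differs from the paper's only in how the parts are handled, and it gives a slightly stronger bound. The paper adds $q+1$ separate mixing-lemma upper bounds on $e(C_i)$. Your simplex function bounds $\sum_i e(C_i)$ in one step. In your notation, this subtracts the extra term $\frac{t-\lambda}{q}\bigl(u^2-\sum_i c_i^2\bigr)$ from the right-hand side of the paper's inequality $(t+\lambda)u^2-(t-\lambda)\sum_i c_i^2\le 2\lambda nu$.
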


\begin{proof}
Let $\mathcal{P} = \{S,C_{1},C_{2},\ldots,C_{q+1}\}$ be a partition of the vertex set of the graph $G$, $ V(G) = C_{1} \sqcup \cdots \sqcup C_{q+1}$, such that there are no edges between the components $C_{i}$ and $C_{j} \ \forall i \neq j$ and $f_q(G) = |S| + \max |C_i|$.
Without loss of generality, assume that the components have the following decreasing size ordering, $c_{1} \geq c_{2} \geq \ldots \geq c_{q+1}$ where $c_{i} = |C_{i}| \  \forall i = 1,2,\ldots,q+1$.
Therefore, $f_{q}(G) = s+c_{1}$, where $s = |S|$. 

We now give a lower bound on $s + c_1$. For the clarity of the analysis we define $C = \bigcup_{i = 1}^{q+1}C_{i}$ and $c = |C| =  \sum_{i=1}^{q+1} c_i = n - s$.
By \Cref{lem:exp_mix_tight}, we get
\begin{equation}
e(C) \geq \frac{tc^2}{2n} - \frac{\lambda}{2}c\left(1 - \frac{c}{n}\right)
= \frac{(t+\lambda)c^2}{2n} - \frac{\lambda c}{2}
\label{eq:edge-lower-bound}
\end{equation}
and
\begin{equation}
e(C_i) \leq \frac{(t-\lambda)c_i^2}{2n} + \frac{\lambda c_i}{2},
\label{eq:edge-upper-bound}
\end{equation}
for all $i = 1,2,\ldots,q+1$. 
The upper bounds on $e(C_i)$ imply
\begin{equation}
\begin{aligned}
e(C)
&\leq \sum_{i=1}^{q+1} e(C_i) \\
&\leq \frac{t-\lambda}{2n}\sum_{i=1}^{q+1} c_i^2
+ \frac{\lambda}{2}\sum_{i=1}^{q+1} c_i \\
&= \frac{t-\lambda}{2n}\sum_{i=1}^{q+1} c_i^2
+ \frac{\lambda c}{2}.
\end{aligned}
\label{eq:edge-upper-bound-sum}
\end{equation}
By combining \Cref{eq:edge-upper-bound-sum} and \Cref{eq:edge-lower-bound} we get  
\[\frac{(t+\lambda)c^2}{2n} - \frac{\lambda c}{2} \leq \frac{t - \lambda}{2n} \sum_{i=1}^{q+1} c_i^2 + \frac{\lambda c}{2}.\]
To simplify the right hand side, we use  $c_i^2 = c_{i}\cdot c_{i} \leq c_1 c_i$ to get $\sum_{i=1}^{q+1} c_i^2 \leq c_1 \sum_{i=1}^{q+1} c_i = c_1 c$.
Therefore, we get
\[(t + \lambda)c^2 - (t - \lambda)c_1 c \leq 2 \lambda n c.\]
Dividing by $c$, then substituting $c = n - s$, and rearranging the equation we get
\[(t + \lambda) s + (t - \lambda)c_1 \geq (t - \lambda) n.\]
Now substitute $s = f - c_1$ and we are ready to prove the lower bound on $f$, where $f = f_{q}(G)$. 
This gives 
\begin{equation}
(t + \lambda)f \geq 2\lambda c_1 + (t - \lambda)n.
\label{eq:bound-f}
\end{equation}
We now need a lower bound on $c_1$ in terms of the other parameters so that we can simplify this linear inequality into a lower bound on $f$. 
Note that $n - f = n - s - c_1 = \sum_{i = 2}^{q + 1} c_i \leq (q+1 -2 +1)c_1 = qc_{1}.$
This implies $c_1 \geq (n - f)/q$.
Substituting it to \Cref{eq:bound-f} we get
\[(t + \lambda)f \geq 2 \lambda (n - f)/q + (t - \lambda)n.\]
Solving for $f$ gives us the following lower bound,
\[
    f_q(G) = f \geq \left(\frac{t - \lambda + 2\lambda/q}{t + \lambda + 2\lambda/q}\right)n.
\]
\end{proof}

\vspace{1em}
Equation~\ref{eq:main_bound} gives a lower bound on the new parameter $f_q(G)$ which is \textit{strictly better} than the spectral lower bound on the vertex integrity of $G$ for all $\lambda > 0$. 
For example, for $q = 2$, we get 
$f_q(G) \geq  \left(\frac{t}{t + 2\lambda} \right) n,$
whereas the bound on vertex integrity is always 
$\iota(G) \geq \left( \frac{t - \lambda}{t + \lambda} \right)n.$

In the following section we compare the  improvements we get in the strong blocking set constructions using this new parameter.

\section{Constructing strong blocking sets from graphs}\label{sec:construction}
In this section, we give the graph-theoretic construction inspired from \cite{ABDN2023}, which we analyse using our new graph parameter.

\begin{definition}
Let $V=\{P_1,\ldots,P_n\}$ be a set of $n$ points in $\mathrm{PG}(k - 1, q)$ and let  $G=(V,E)$ be a  graph with vertex set equal to $V$. We define 
the set of lines
$$\mathcal{L}(V, G) \coloneqq \{ \langle P_i,P_j\rangle : P_iP_j \in E\}$$
and the set of points
$$\mathcal{B}(V,G) \coloneqq \bigcup_{\ell \in \mathcal{L}(V, G)} \ell $$
\end{definition}

\begin{remark} \label{rem:blockingsetsize}
    The size of $\mathcal{B}(V, G)$ is at most $n + (q - 1)|E|$, since there are $|E|$ lines, each of which contains at most $q - 1$ points not in $V$. 
\end{remark}

The following result lies at the heart of our construction as it gives a sufficient condition for the line-set $\mathcal{L}(V, G)$ to satisfy the avoidance property.

\begin{lemma}\label{lem:avoid_prop_param}
    Let $V \subseteq \mathrm{PG}(k - 1, q)$, with $|V| = n$ such that every hyperplane $H$ contains at most $n - d$ points of \  $V$, for some integer $d > 0$. 
    Let $G$ be a graph with vertex set $V(G) = V$ and edge set $E$, such that $f_q(G) \geq n - d + 1$. 
    Then $\mathcal{L} = \{ \langle x, y \rangle : xy \in E\}$ has the avoidance property.
\end{lemma}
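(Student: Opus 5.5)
We argue by contradiction. Suppose $\mathcal{L}$ does not have the avoidance property, so there is a codimension-$2$ subspace $W$ of $\mathrm{PG}(k-1,q)$ meeting every line $\langle x,y\rangle$ with $xy \in E$. Let $H_1,\dots,H_{q+1}$ be the $q+1$ hyperplanes through $W$; they partition the points outside $W$. We then define a partition of $V$ of the shape used in \Cref{def:new_parameter}: put $S \coloneqq V \cap W$ and $C_i \coloneqq V \cap (H_i \setminus W)$ for $i=1,\dots,q+1$. Then $V = S \sqcup C_1 \sqcup \cdots \sqcup C_{q+1}$.

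First we check that $e(C_i,C_j)=0$ for $i\neq j$. Suppose $x\in C_i$, $y\in C_j$ and $xy\in E$. The line $\ell=\langle x,y\rangle$ meets $W$ by assumption, say in a point $w$. Then $w \neq x$, since $x\notin W$, so $\ell=\langle x,w\rangle\subseteq H_i$ because $x,w\in H_i$. Hence $y\in H_i\setminus W$. Since $y\in H_j\setminus W$ and the sets $H_i\setminus W$, $H_j\setminus W$ are disjoint, this is a contradiction.

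Next we bound the value of this partition. For each $i$, the set $S\sqcup C_i = V\cap H_i$ lies in a hyperplane, so by hypothesis $|S|+|C_i|\le n-d$. Hence
\[
f_q(G)\le |S|+\max_i|C_i|\le n-d,
\]
which contradicts $f_q(G)\ge n-d+1$. Therefore $\mathcal{L}$ has the avoidance property.

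The only point needing care is the geometric step that a line through a point of $H_i\setminus W$ which meets $W$ lies entirely in $H_i$. This is immediate once we take $w\in\ell\cap W$ and note $w\neq x$. Beyond that, we only need the edge case where some $C_i$ are empty, which \Cref{def:new_parameter} permits.
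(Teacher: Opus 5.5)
Your proposal is correct and follows the paper's proof essentially step for step. You use the same partition $S = V\cap W$, $C_i = V\cap(H_i\setminus W)$, the same argument that any edge between $C_i$ and $C_j$ forces its line into $H_i$, and the same hyperplane bound $|S|+|C_i|\le n-d$ giving $f_q(G)\le n-d$. The only cosmetic difference is that the paper derives $\ell\subseteq H_i\cap H_j=W$, whereas you conclude $y\in H_i\setminus W$.
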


\begin{proof}
Let us suppose for the sake of contradiction that the set of lines $\mathcal{L}$ does not satisfy the avoidance property.
Therefore, there exists a codimension-2 subspace $W$ such that every line $\ell \in \mathcal{L}$ contains a point of $W$.
Since $W$ is of codimension-2, there are exactly $q + 1$ distinct hyperplanes containing $W$, which partition the points of $\mathrm{PG}(k-1, q)$ outside $W$ into $q + 1$ parts.
Define $S = V \cap W$ and let  $C_{i} = (H_{i} \setminus W) \cap V$ for $i = 1,\ldots,q+1$, where $H_{i}$ is the $i$-th hyperplane through $W$ (see \cref{fig:visual_proof_avoid}).
Note that if a line contains two points of the subspace then it must be fully contained in it. 
Say, there exists an edge $e = xy$ with $x \in C_i$ and $y \in C_j$ for some $i \neq j$, and let $\ell = \langle x, y \rangle$.
Since $x \in H_i \setminus W$, $\ell$ is not fully contained in $W$.
If $\ell \cap W$ contains a point $z$, then $\ell$ must be fully contained in both $H_i$ and $H_j$, and thus $\ell \subseteq W = H_i \cap H_j$, which is a contradiction.
Therefore, there are no edges between any pair of components  $C_i$ and $C_{j}$ for $i\neq j$ and from the definition of $f_{q}(G)$, we get $f_{q}(G) \leq |S| + \max_{i \in [q+1]} |C_{i}|$. 

Since $S \cup C_{i} \subseteq H_{i}$, we have $|S| + |C_{i}| \leq n - d$ for all $i$. 
Therefore, $f_{q}(G) = |S| + \max_{i} |C_i| \leq n-d$, contradicting our assumption that $f_{q}(G) > n - d$.
\end{proof}

\begin{figure}[htb]
    \centering
\begin{tikzpicture}[
    scale=1,
    hyperplane/.style={thick, draw=black, fill=gray!10, fill opacity=0.7},
    labelstyle/.style={font=\Large\bfseries, text=black}
]

    \def\R{7}

    \foreach \ang/\text in {
        45/H_1,
        185/H_i,
        335/H_{q+1}
    } {
        
        \draw[hyperplane]
            (0,0) .. controls (\ang-35:\R) and (\ang+35:\R) .. (0,0);
    }

    \foreach \hang in {45, 185, 335} {
        \foreach \da/\r in {
            -10/2.3, 10/2.3,
            -15/3.1, 0/2.9, 15/3.1,
            -8/3.8, 8/3.8
        } {
            \fill[green!30!black] (\hang+\da:\r) circle (3.0pt);
        }
    }

    \node[green!30!black, font=\small\bfseries] at (45:1.8) {$C_1$};
    \node[green!30!black, font=\small\bfseries] at (185:1.8) {$C_i$};
    \node[green!30!black, font=\small\bfseries] at (335:1.9) {$C_{q+1}$};

    \foreach \ang/\text in {
        45/H_1,
        185/H_i,
        335/H_{q+1}
    } {
        \node[labelstyle, fill=gray!10, inner sep=2pt, rounded corners=2pt] at (\ang:4.6) {$\text$};
    }

    \foreach \ang in {70, 80, 90, 100, 110, 120, 130, 140, 150, 160,
                  220, 230, 240, 250, 260, 270, 280, 290} {
    \fill[black] (\ang:3.2) circle (1.0pt);
}

    \node[
        circle, 
        draw=black, 
        very thick, 
        fill=white, 
        minimum size=2.8cm, 
        inner sep=0pt
    ] (W) at (0,0) {};

    \node[font=\huge\bfseries] at (0, 0.65) {$W$};

    \foreach \coord in {
        (-0.8, 0.2), (-0.2, 0.1),
(0.5, 0.4), (-0.7, -0.5),
(0.2, -0.5), (0.5, -0.2),
(0.9, -0.1), (-0.3, -0.7),
(0.6, -0.7)
    } {
        \fill[red] \coord circle (2.7pt);
    }

    \node[red, font=\large\bfseries] at (0, -1.1) {$S$};

\end{tikzpicture}
 \caption{The figure represents the visualization of the construction in the proof  of \cref{lem:avoid_prop_param}, the circle in the middle represents the codimension-2 subspace $W$ and the petals of the flower represents the $q+1$ hyperplanes that goes through $W$ and partition the remaining points. The red points represents the set $S$ and the green points in every hyperplane represents those that belong to each one of the components $C_{i}$'s.}
    \label{fig:visual_proof_avoid}
\end{figure}

\begin{corollary}\label{cor:proj_avoid}
If $V$ is a projective $[n,k,d]_{q}$ system and $G = (V,E)$ is a graph with parameter $f_{q}(G) \geq n- d + 1$, then $\mathcal{B}(V,G)$ is a strong blocking set in $\mathrm{PG}(k-1,q)$ of size at most $n + (q - 1)|E|$.
\end{corollary}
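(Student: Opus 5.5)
The plan is to chain together three results already established: \cref{lem:avoid_prop_param}, \cref{thm:avoid_prp} and \cref{rem:blockingsetsize}.

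First, we translate the hypothesis on $V$ into the form required by \cref{lem:avoid_prop_param}. By \cref{def:projective_system}, $V$ is a projective $[n,k,d]_q$ system, so $d = n - \max_H |H \cap V|$, the maximum taken over hyperplanes $H$. Hence every hyperplane meets $V$ in at most $n-d$ points. Moreover $d>0$: $V$ spans $\mathrm{PG}(k-1,q)$, so no hyperplane contains all of $V$. One small point needs care here. A projective system is a priori a multiset, whereas the lemma uses a set of $n$ points and a graph on $n$ vertices. I would take $V$ to be a set, as the corollary implicitly does by letting it serve as a vertex set. If $V$ had repeated points, the only change is that an edge between two copies of the same point does not define a line. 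Such edges can simply be discarded, since the counting in the proof of \cref{lem:avoid_prop_param} is done on vertices.

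Second, we apply \cref{lem:avoid_prop_param}. Its hypotheses are the hyperplane bound $n-d$ just verified and $f_q(G) \geq n-d+1$, which is assumed. It yields that $\mathcal{L}(V,G) = \{\langle x,y\rangle : xy \in E\}$ has the avoidance property.

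Third, \cref{thm:avoid_prp} then shows that $\mathcal{B}(V,G) = \bigcup_{\ell \in \mathcal{L}(V,G)} \ell$ is a strong blocking set in $\mathrm{PG}(k-1,q)$. Finally, the size bound $|\mathcal{B}(V,G)| \le n + (q-1)|E|$ is exactly \cref{rem:blockingsetsize}. Each of the $|E|$ lines has $q+1$ points, two of which lie in $V$, so each line adds at most $q-1$ new points.

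There is essentially no obstacle. The only step needing attention is the first one: checking that the distance parameter of the projective system gives exactly the hyperplane bound $|H\cap V|\le n-d$ with $d>0$, and noting the set-versus-multiset convention.
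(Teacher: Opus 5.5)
Your proposal is correct and follows the paper's own argument: you read off from the projective system that every hyperplane contains at most $n-d$ points of $V$, apply \cref{lem:avoid_prop_param} to get the avoidance property, then use \cref{thm:avoid_prp} for the strong blocking property and the line-counting remark for the size bound. Your multiset aside also works, provided degenerate edges are dropped only when forming lines and not from $G$ itself (deleting edges can only lower $f_q$); this is fine because both endpoints of such an edge lie in the same part of the partition.
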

\begin{proof}
A projective $[n, k, d]_q$ system corresponds to a set $V$ of points in $\mathrm{PG}(k-1, q)$ such that every hyperplane contains at most $n - d$ points of $V$. 
We saw that $\mathcal{L}(V, G)$ must be a set of $|E|$ lines that satisfy the avoidance property, and hence $\mathcal{B}(V, G)$ is a strong blocking set of size at most $n + (q-1)|E|$. 
\end{proof}

We proceed now to the proof of the main result by giving an explicit construction of strong blocking sets in $\mathrm{PG}(k-1,q)$ with size at most $C kq$, with a better constant than obtained in prior work.

\section{Explicit construction of strong blocking sets}\label{sec:ag_codes}

Following the approach of Alon et al.\cite{ABDN2023}, in combination with the new graph parameter that depends on the field size, we reduce the size of the explicit strong blocking sets, by using Algebraic-Geometric (AG) codes, expander graphs, and field reduction. 
In this section, we compute the new constant $C$ in the construction of a strong blocking set of size $C k (q + 1)$ and show that it is better in comparison with the previous constant in \cite{ABDN2023} for all values of $q$.

\subsection{Analysis of the new parameter over quadratic fields}
Our first result is obtained via a combination of \Cref{cor:proj_avoid} and \Cref{lem:new_bound} together with the expander graph explicit construction given by \cref{thm:bound_2}.
Note that for every square prime power $q$ there exists an explicit AG-code families of rates $R,\delta > 0$ that satisfy $R+ \delta \geq 1-(\sqrt{q}-1)^{-1}$ \cite{tsfasman2013algebraic, couvreur2021algebraic}, that is, we can construct an $(R,\delta)_{q}-$family of $[n_{i},k_{i},d_{i}]_{q}$ codes for some increasing sequences $\{n_{i}\}_{i \in \mathbb{N}},\{k_{i}\}_{i \in \mathbb{N}}$ and $\{d_{i}\}_{i \in \mathbb{N}}$.

\begin{theorem}\label{thm:quadratic_bound}
Let $t \geq 3$ and let $q > 4$ be a square prime power such that 
\[\frac{1}{\sqrt{q}-1} < \frac{t - 2\sqrt{t-1}+ \frac{4\sqrt{t-1}}{q}}{t + 2\sqrt{t-1} + \frac{4\sqrt{t-1}}{q}}\]
and let $\varepsilon > 0$. Then there is an increasing sequence $\{k_{i}\}_{i \in \mathbb{N}}$ for which we can explicitly construct strong blocking sets in $\mathrm{PG}(k_{i} - 1, q)$ of size at most $$\left(\frac{t(t+2\sqrt{t-1}+\frac{4\sqrt{t-1}}{q})(\sqrt{q}-1)}{2\left(t(\sqrt{q}-2) -2\sqrt{q(t-1)} + \frac{4(\sqrt{q(t-1)}-2\sqrt{t-1})}{q}\right)} + \varepsilon\right) k_{i}q.$$
\end{theorem}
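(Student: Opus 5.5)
We combine \Cref{cor:proj_avoid}, \Cref{lem:new_bound} and \cref{thm:bound_2} with an explicit AG-code family, and the size estimate then comes from \Cref{rem:blockingsetsize}. Write $\lambda_0 = 2\sqrt{t-1}$ and
\[\beta \coloneqq \frac{t - \lambda_0 + 2\lambda_0/q}{t + \lambda_0 + 2\lambda_0/q},\]
so the hypothesis reads $\frac{1}{\sqrt q - 1} < \beta$. Since the spectral bound is continuous in $\lambda$, fix a small $\varepsilon'>0$ and set $\lambda = \lambda_0+\varepsilon'$. The resulting ratio $\beta'$ then still satisfies $\frac{1}{\sqrt q-1}<\beta'$, and $\beta'\to\beta$ as $\varepsilon'\to 0$.

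\textbf{Choosing the code.} Choose the relative distance $\delta$ with $1-\delta < \beta'$, and set the rate to $R = 1-\frac{1}{\sqrt q -1} - \delta$. This rate is positive because $\delta$ can be taken just above $1-\beta'$, and $1-\beta' < 1-\frac{1}{\sqrt q-1}$. The AG-code families then give explicit $[n_i,k_i,d_i]_q$ codes with $k_i/n_i \to R$ and $d_i/n_i\to\delta$. We pass to the associated projective systems $V_i$, discarding any degenerate coordinates, which only helps. By \cref{thm:bound_2}, for $n_i$ large with $n_i t$ even, there is an explicit $(n_i,t,\lambda)$-graph $G_i$. The parity condition can be handled by restricting to a subsequence, or by adding one repeated point, which changes nothing asymptotically. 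Place $G_i$ on $V_i$ via any bijection. \Cref{lem:new_bound} gives $f_q(G_i)\ge \beta' n_i$, and $\beta' n_i \ge (1-\delta)n_i + 1 \ge n_i-d_i+1$ once $i$ is large, since $d_i/n_i\to\delta$ and $\beta' > 1-\delta$ strictly. \Cref{cor:proj_avoid} then says $\mathcal{B}(V_i,G_i)$ is a strong blocking set in $\mathrm{PG}(k_i-1,q)$.

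\textbf{Size.} Since $|E| = tn_i/2$, \Cref{rem:blockingsetsize} bounds the size by
\[n_i + (q-1)\frac{tn_i}{2} \le \frac{t q n_i}{2}\quad (t\ge 2).\]
It remains to express $n_i$ through $k_i$. Taking $\delta$ arbitrarily close to $1-\beta$ (and $\varepsilon'\to0$) gives
\[\frac{n_i}{k_i}\to \frac{1}{R} = \frac{1}{\beta - \frac{1}{\sqrt q-1}} = \frac{(\sqrt q-1)(t+\lambda_0+2\lambda_0/q)}{(\sqrt q -1)(t-\lambda_0+2\lambda_0/q) - (t+\lambda_0+2\lambda_0/q)}.\]
Expanding the denominator gives
\[t(\sqrt q-2) - 2\sqrt{q(t-1)} + \frac{4(\sqrt{q(t-1)}-2\sqrt{t-1})}{q},\]
since $-\lambda_0\sqrt q = -2\sqrt{q(t-1)}$ and the $\lambda_0$ terms combine as stated. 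Multiplying by $t/2$ yields exactly the constant in the theorem. All the slack coming from $\varepsilon'$, from $\delta$ versus $1-\beta$, and from $k_i/n_i$ versus $R$ is absorbed into the additive $\varepsilon$ for large $i$.

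\textbf{Main obstacle.} The main difficulty is bookkeeping of limits. The inequality $f_q(G_i)\ge n_i-d_i+1$ needs strict slack, and this is exactly where the strict hypothesis on $q$ and $t$ is used. The code family, the graph sequence (with its parity and $n_0$ constraints) and the rate/distance limits must also be aligned on a common increasing sequence of lengths. I would handle this by fixing $\varepsilon',\delta$ first, depending on $\varepsilon$, and then taking $i$ large enough for all the asymptotic statements simultaneously.
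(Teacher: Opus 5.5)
Your proposal is correct and follows the paper's proof essentially step for step. It uses the same AG-code family, the explicit graphs of \cref{thm:bound_2} with $\lambda=2\sqrt{t-1}+\varepsilon'$, \Cref{lem:new_bound} combined with \Cref{cor:proj_avoid}, the size bound $n_i+(q-1)tn_i/2\le tqn_i/2$, and the same algebra for $t/(2R)$. Your differences are cosmetic: you parametrize by $\delta$ instead of $R$, and you also handle the parity and degeneracy points that the paper passes over.

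One small fix is needed. The middle inequality $(1-\delta)n_i+1\ge n_i-d_i+1$ requires $d_i\ge\delta n_i$, which $d_i/n_i\to\delta$ does not literally give. Running the same chain with any $\delta''\in(1-\beta',\delta)$ in place of $\delta$ makes it valid for all large $i$.
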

Before analyzing the proof, we show what this result implies about the constant in the bound on the size of strong blocking set in $\mathrm{PG(k-1,q)}$ in the case where $q$ is a square. 
Then for each $q$, we choose an optimal value for $d$ to minimize the bound. 
This leads to finding the minimum values of the following function 
$$F_{q}(t) \coloneqq \frac{t(t+2\sqrt{t-1}+\frac{4\sqrt{t-1}}{q})(\sqrt{q}-1)}{2\left(t(\sqrt{q}-2) -2\sqrt{q(t-1)} + \frac{4(\sqrt{q(t-1)}-2\sqrt{t-1})}{q}\right)},$$
where we can extend the domain to $\mathbb{R}_{> 2}$. First to simplify the analysis we substitute $y = \sqrt{t-1}$ and then we want to find the local extrema by setting the derivative of the new created function to be equal to zero.
This amounts to finding the zeros of the polynomial: 
\[
\begin{aligned}
\psi_q(y)
={}&(\sqrt{q}-1)\Bigl[
 q^{5/2}\bigl(y^5-2y^4-2y^3+y+2\bigr)
 +8q^{3/2}\bigl(y^4+y^2\bigr)
  \\
&\quad -2q^{2}(y+1)(y^{2}+1)^{2}
+16\sqrt{q}\,y^{3}
 -16q\bigl(y^2+y+1\bigr)y^{2}
 -32y^{3}
\Bigr].
\end{aligned}
\]
We can see that because we are working in the domain where $(y>1)$ while $q$ grows the roots of this polynomial converge to those of $(y-1)(y+1)(y^{3}-2y^{2}-y-2)$ and the unique root will be 
$$y_{0} = \frac{1}{3}\left(2+(44-3\sqrt{177})^{1/3} + (44+3\sqrt{177})^{1/3})\right)$$
All the other roots will be either out of our domain $(y=-1,y=1)$ or they will be complex.
So for large values of $q$, then $F_{q}(t)$ will be minimized for 
$$t \approx t_{0} = 1+ y_{0}^{2} = 3 + \frac{1}{3}(459-12\sqrt{177})^{1/3} + \frac{1}{3}(459 + 12\sqrt{177})^{1/3} \approx 8.0701$$ 
so in that case someone should take either $t=8$ or $t = 9$ now for large enough $q$ we have that: 
$F_{q}(8) < F_{q}(9)$ and so while $q$ tends to infinity, the optimal constant that this construction provides is 
$$\lim_{q \rightarrow \infty} F_{q}(8) = \frac{4}{9}(23+8\sqrt{7}) \approx 19.63$$ because in the fraction that appears in $F_{q}(8)$ the leading $q$ terms cancel. 
In \cref{tab:square_primes} we can see for smaller values of $q$ the computation of optimal choice of $d$ and the corresponding constant in the upper bound. Motivated by the previous analysis we are ready to prove \cref{thm:quadratic_bound}.
\begin{proof}
Let $\varepsilon_{1} = \varepsilon_{1}(t,q) > 0$ be sufficiently small and we set $\lambda = 2\sqrt{t-1} + \varepsilon_{1}$. We have $(\sqrt{q} - 1)^{-1} < \frac{t-\lambda + \frac{2\lambda}{q}}{t + \lambda + \frac{2\lambda}{q}} - 2\varepsilon_{1}$ and we set also $R = \frac{t-\lambda + \frac{2\lambda}{q}}{t+ \lambda + \frac{2\lambda}{q}} - \varepsilon_{1} - (\sqrt{q} - 1)^{-1}$ and $\delta = 1-R - (\sqrt{q}-1)^{-1} = 1 - \frac{t-\lambda + \frac{2\lambda}{q}}{t+ \lambda + \frac{2\lambda}{q}} + \varepsilon_{1}$.

Let $\{\mathcal{A}_{n_{i},R}\}_{i \in \mathbb{N}}$ be the $(R,\delta)_{q}-$family of projective $[n_{i},k_{i},d_{i}]_{q}$ systems. \cref{thm:bound_2} shows that there exists some $K$ for which we obtain an explicit sequence $\{G_{i}\}_{i > K}$ of $(n_{i},d,\lambda)-$graphs. By the spectral bound of \cref{eq:main_bound} we have that $$f_{q}(G_{n_{i}}) \geq n_{i} \left(\frac{t - \lambda + 2\lambda/q}{t + \lambda + 2\lambda/q}\right) \geq n_{i}(1-\delta + \varepsilon_{1}).$$

Since $\lim_{i \rightarrow \infty} \frac{d_{i}}{n_{i}} = \delta$, we have $f_{q}(G_{n_{i}}) \geq n_{i} - d_{i} +1$ for sufficiently large $i$. Then from \Cref{cor:proj_avoid} $\mathcal{B}(\mathcal{A}_{n_{i},R},G_{n_{i}})$ is a strong blocking set in $\mathrm{PG}(k - 1, q)$. Since $G_{n_{i}}$ has $\frac{1}{2}n_{i}t$ edges by the handshaking lemma, we have that 
$$|\mathcal{B}(\mathcal{A}_{n_{i},R},G_{n_{i}})| \leq n_{i} + (q-1)\frac{n_{i}t}{2} < \frac{n_{i}t}{2}q$$
Now, since the $\lim_{i \rightarrow \infty} \frac{k_{i}}{n_{i}} = R$, there exists some $\varepsilon_{1} > 0$ such that $\frac{k_{i}}{n_{i}} \geq R - \varepsilon_{1} \implies \frac{n_{i}}{k_{i}} \leq \frac{1}{R-\varepsilon_{1}} \implies n_{i} \leq \frac{k_{i}}{R-\varepsilon_{1}}$ for sufficiently large $i$. After we make the substitution, and recall our choice of $R = \frac{t-\lambda + \frac{2\lambda}{q}}{t+ \lambda + \frac{2\lambda}{q}}- \varepsilon_{1} - (\sqrt{q} - 1)^{-1}$ so the upper bound becomes. 
$$\frac{n_{i}t}{2}q \leq \frac{t}{2(R-\varepsilon_{1})}k_{i}q = \frac{t(t+\lambda + \frac{2\lambda}{q})(\sqrt{q}-1)}{2(t-\lambda + \frac{2\lambda}{q})(\sqrt{q}-1) - 2(t+\lambda + \frac{2\lambda}{q}) - 4\varepsilon_{1}(t+ \lambda + \frac{2\lambda}{q})(\sqrt{q}-1)}k_{i}q$$. 

If we choose now $\varepsilon_{1}$ to be sufficiently small, we obtain the upper bound to be: 

$$|\mathcal{B}(\mathcal{A}_{n_{i},R},G_{n_{i}})| \leq \Big(\frac{t(t+\lambda + \frac{2\lambda}{q})(\sqrt{q}-1)}{2((t - \lambda + \frac{2\lambda}{q})(\sqrt{q}-1) - (t+\lambda + \frac{2\lambda}{q}))} + \frac{\varepsilon}{2}\Big)k_{i}q$$ and if we recall that $\lambda = 2\sqrt{t-1} + \varepsilon_{1}$ we get

$$\frac{t(t+\lambda + \frac{2\lambda}{q})(\sqrt{q}-1)}{2((t - \lambda + \frac{2\lambda}{q})(\sqrt{q}-1) - (t+\lambda + \frac{2\lambda}{q}))}=$$ 

$$\frac{t(t+2\sqrt{t-1} + \frac{4\sqrt{t-1}}{q})(\sqrt{q}-1) + \varepsilon_{1}t\Big((\sqrt{q}-1) + \frac{\sqrt{q}-1}{q}\Big)}{2\Big(t(\sqrt{q}-2) - 2\sqrt{q(t-1)} - \varepsilon_{1}\sqrt{q} + \frac{4}{q}(\sqrt{q(t-1)} + 2\varepsilon_{1}\sqrt{q} - 2\sqrt{t-1} - \varepsilon_{1})\Big)} $$
and the result follows provided that $\varepsilon_{1}$ is sufficiently small.
\end{proof}

\begin{center}
\begin{table}[htb]
    \centering
    \begin{tabular}{|c|c|c|c|}
        \hline
            $q$ & $\mathrm{argmin} \ F_{q}(t)$ & \text{Upper bound using} $f_{q}(G)$ & \text{Upper bound using} $\iota(G)$  \\ \hline
        9   & $t = 73$ & 264.63 & 292.68  \\ \hline
        16   & $t = 33$ & 96.02 & 104.60 \\  \hline
        25   & $t = 23$ & 62.60 & 66.86 \\ \hline
        49   & $t = 17$ & 42.17 & 43.91 \\ \hline
        64   & $t = 15$ & 37.82 & 39.07 \\ \hline
        81   & $t = 14$ & 34.88 & 35.83 \\ \hline
        121   & $t = 13$ & 31.15 & 31.76  \\ \hline
        169   & $t = 12$ & 28.89 & 29.31 \\ \hline
        $256 \leq q \leq 361$  & $t = 11$ & 26.79 & 27.06 \\ \hline
        $529 \leq q \leq 1024$   & $t = 10$ & 24.32 & 24.44 \\ \hline
        $1369 \leq q \leq 11881$   & $t = 9$ & 22.42 & 22.46 \\ \hline
        $q \geq 12769$   & $t = 8$  & 20.52 & 20.52 \\ \hline
    \end{tabular}
    \caption{For given ranges of square prime powers $q$, this table provides the degrees $t$ that minimize the size of strong blocking sets obtained by \cref{thm:quadratic_bound} and upper bounds the normalized constant $C$ in front of $k(q+1)$ on the corresponding sizes (third column)  and in fourth column we provide the previous bound in the normalized constant $C$ that was obtained using the graph parameter of vertex integrity.} 
    \label{tab:square_primes}

\end{table}
\end{center}

\subsection{Non-quadratic Fields}
\label{sec:non-quadratic}

In this section we show how to use the trick of field reduction to take a strong blocking set over the field $\mathbb F_{q^{2}}$ and build from it a strong blocking set over $\mathbb{F_{q}}$ that is not much larger. 
This is similar to the work done in \cite{ABDN2023}, that was based on \cite{alfarano2022three}, but we include it here for completeness.

We recall the \textit{Field reduction map}, which we denote by $\mathcal{F}_{q,r}$. This map uses the fact that points of $\mathrm{PG}(K-1,q^{r})$ are $1$-dimensional $\mathbb{F}_{q^r}$-subspaces of $\mathbb{F}^{K}_{q^{r}}$, which can be viewed as $r$-dimensional $\mathbb{F}_{q}-$subspaces of $\mathbb{F}^{rK}_{q}$. 
So the map $\mathcal{F}_{q,r}$ sends points of $\mathrm{PG}(K-1,q^{r})$ to $(r-1)$-spaces of $\mathrm{PG}(rK-1,q)$. For more details on the technique of field reduction we refer the reader to see \cite{lavrauw2015field}.

Also as can be shown in \cite[Theorem 4.2]{alfarano2022three}, the field reduction map preserves some key properties related to strong blocking sets and for that reason we begin the discussion by introducing the following definition.
\begin{definition}
Let $\mathcal{L} = \{\ell_{1},\ell_{2},\ldots,\ell_{t}\}$ be a collection of lines in $\mathrm{PG}(K-1,q^{2})$. We will say that a set of points $\Lambda \subseteq \mathrm{PG}(K-1,q^{2})$ is viable for $\mathcal{L}$ if $\Lambda = \cup_{i=1}^{t} \Lambda^{(i)}$, where each $\Lambda^{i} = \{\lambda_{1}^{(i)},\lambda_{2}^{(i)},\lambda_{3}^{(i)},\lambda_{4}^{(i)}\} \subseteq \ell_{i}$ is a set of four points that do not lie on a common $\mathbb{F}_{q}$-subline of $\ell_{i}$.

Given a viable set $\Lambda$, we define the derived set to be the set 
$$\mathcal{F}_{q,2}(\Lambda) =\{\mathcal{F}_{q,2}(\lambda_{j}^{(i)}): i \in [t],j \in [4]\}$$
of lines in $\mathrm{PG}(2K-1,q)$.
\end{definition}
\begin{theorem}(See \cite{alfarano2022three})\label{thm:viable_strong}
Let $\mathcal{L} = \{\ell_{1},\ell_{2},\ldots,\ell_{t}\}$ be a set of lines in $\mathrm{PG}(K-1,q^{2})$ whose union forms a strong blocking set. if $\Lambda$ is a viable set for $\mathcal{L}$, then the union of the lines in the derived set $\mathcal{F}_{q,2}(\Lambda)$ is a strong blocking set in $\mathrm{PG}(2K-1,q)$.
\end{theorem}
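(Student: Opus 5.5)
We argue by the contrapositive, exactly as in the proof of \Cref{thm:avoid_prp}. Write $\mathcal{B}' = \bigcup_{m \in \mathcal{F}_{q,2}(\Lambda)} m \subseteq \mathrm{PG}(2K-1,q)$ and suppose it is not a strong blocking set. Then there is a hyperplane $\Pi$ of $\mathrm{PG}(2K-1,q)$ and a codimension-$2$ subspace $W \subset \Pi$ with $\mathcal{B}' \cap \Pi \subseteq W$. Every line $m = \mathcal{F}_{q,2}(\lambda_j^{(i)})$ meets $\Pi$, so every derived line meets $W$. The goal is to transport this back to $\mathrm{PG}(K-1,q^2)$ and contradict the hypothesis that $\bigcup_i \ell_i$ is a strong blocking set.

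The transport goes through the trace form. Hyperplanes of $\mathbb{F}_q^{2K}$ are kernels of $\mathbb{F}_q$-linear functionals. Every such functional has the form $x \mapsto \mathrm{Tr}_{q^2/q}(a\cdot x)$ for a unique $a \in \mathbb{F}_{q^2}^K$, where $a \cdot x$ is the standard $\mathbb{F}_{q^2}$-bilinear form. A codimension-$2$ space $W$ is therefore the common kernel of two such functionals, with vectors $a, b \in \mathbb{F}_{q^2}^K$ that are $\mathbb{F}_q$-independent.

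For a point $\lambda = \langle v\rangle_{q^2}$, the line $\mathcal{F}_{q,2}(\lambda)$ is the $\mathbb{F}_q$-space $\{\mu v : \mu \in \mathbb{F}_{q^2}\}$. Meeting $W$ means there is $\mu \neq 0$ with $\mathrm{Tr}(\mu\, a\cdot v) = \mathrm{Tr}(\mu\, b\cdot v) = 0$. Put $\alpha = a\cdot v$ and $\beta = b\cdot v$. Since $\mathrm{Tr}(\mu\,\cdot)$ has a $1$-dimensional $\mathbb{F}_q$-kernel in $\mathbb{F}_{q^2}$, a suitable $\mu$ exists exactly when one of the following holds: $\alpha = 0$, or $\beta = 0$, or $\beta/\alpha \in \mathbb{F}_q$. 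In projective language, $(a\cdot v : b\cdot v) \in \mathrm{PG}(1,q^2)$ lies in the Baer subline $\mathrm{PG}(1,q)$, after treating the degenerate cases separately.

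Now fix an original line $\ell_i = \langle u, w\rangle$ and consider the map $\phi_i : \ell_i \to \mathrm{PG}(1,q^2)$, $\langle v\rangle \mapsto (a\cdot v : b\cdot v)$. We split into cases.
\begin{itemize}
\item If $a$ and $b$ restricted to $\ell_i$ are $\mathbb{F}_{q^2}$-independent, then $\phi_i$ is a projectivity. The four points $\lambda_j^{(i)}$ must all map into the Baer subline $\mathrm{PG}(1,q)$. Projectivities map Baer sublines to Baer sublines, so the four points would lie on a common $\mathbb{F}_q$-subline of $\ell_i$, contradicting viability.
\item If the restrictions are dependent, say $b|_{\ell_i} = c\, a|_{\ell_i}$ with $c \in \mathbb{F}_{q^2}$, then $\ell_i$ meets the $\mathbb{F}_{q^2}$-subspace $U_c = \ker(b - c a)$.
\end{itemize}
Thus every line $\ell_i$ must fall in the degenerate case, and I have to show this forces all $\ell_i$ to meet a single $\mathbb{F}_{q^2}$-codimension-$2$ space, or to lie in a bad hyperplane configuration. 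That is the main obstacle, because the constant $c$ may depend on $i$.

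My first attempt at this obstacle is as follows. If $a$ and $b$ are $\mathbb{F}_{q^2}$-dependent, say $b = ca$ with $c \notin \mathbb{F}_q$, then $W$ corresponds to the $\mathbb{F}_{q^2}$-hyperplane $\ker a$. In that case every derived line meets $\ker a$ exactly when $a\cdot v = 0$, which forces all four chosen points of every $\ell_i$ into the hyperplane $\ker a$. Then every $\ell_i \subseteq \ker a$, contradicting that a strong blocking set spans and meets each hyperplane in a spanning set.

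If $a$ and $b$ are $\mathbb{F}_{q^2}$-independent, each $\ell_i$ lies in the dependent case, so $\ell_i$ meets the codimension-$2$ space $\ker a \cap \ker b$ or meets some $U_c$. I would then use the fact that all four points, hence their images, satisfy the subline condition. This should force $c \in \mathbb{F}_q \cup \{\infty\}$. Consequently $\ell_i \cap H_c \subseteq \ker a\cap\ker b$ for the hyperplanes $H_c = \ker(b-ca)$ with $c \in \mathrm{PG}(1,q)$. Picking one such hyperplane, for instance $H_\infty = \ker a$, I would then argue that $\bigcup_i \ell_i \cap \ker a$ is contained in $\ker a\cap \ker b$, contradicting the strong blocking property of $\bigcup_i \ell_i$ in $\mathrm{PG}(K-1,q^2)$. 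If this bookkeeping fails, I would fall back on citing \cite[Theorem 4.2]{alfarano2022three} directly, since the statement is taken from there.
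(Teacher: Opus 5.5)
The paper gives no proof of this statement; it is quoted from \cite{alfarano2022three}, which is also your fallback. Your self-contained route is viable, and most of it is correct:
\begin{itemize}
\item the trace description of $W$;
\item the criterion that $\mathcal{F}_{q,2}(\langle v\rangle)$ meets $W$ iff $\langle v\rangle\in S:=\ker a\cap\ker b$ or $(a\cdot v:b\cdot v)\in\mathrm{PG}(1,q)$;
\item the projectivity case;
\item the case $b=ca$ with $c\notin\mathbb{F}_q$.
\end{itemize}
By using only that every derived line meets $W$, you are aiming at the stronger avoidance property for the derived lines. That is fine, because it holds.

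The step that fails is the last one. The dependent case gives more than you record. If $b|_{\ell_i}=c\,a|_{\ell_i}$ (resp.\ $a|_{\ell_i}=0$), then $\ell_i\subseteq H_c=\ker(b-ca)$ (resp.\ $\ell_i\subseteq H_\infty=\ker a$). Every $H_c$ contains $S$, so the dependence of $c$ on $i$ is no obstacle.

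If $\ell_i\not\subseteq S$, then $\ell_i$ lies in exactly one $H_{c_i}$. The four chosen points are distinct, since viability forces this. Hence at least one of them, $\langle v\rangle$, lies in $H_{c_i}\setminus S$, where $(a\cdot v:b\cdot v)=(1:c_i)$, or $(0:1)$ if $c_i=\infty$. The criterion then gives $c_i\in\mathbb{F}_q\cup\{\infty\}$.

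But now your test hyperplane $H_\infty=\ker a$ is one of the $q+1$ hyperplanes of this Baer subpencil. Any $\ell_i\subseteq\ker a$ with $\ell_i\not\subseteq S$ has $\ell_i\cap\ker a=\ell_i\not\subseteq S$. Your claimed inclusion $\ell_i\cap H_c\subseteq S$ for $c\in\mathbb{F}_q\cup\{\infty\}$ is exactly backwards. Nothing prevents $\ker a\cap\bigcup_i\ell_i$ from spanning $\ker a$, so no contradiction arises there.

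The repair is to test against a hyperplane through $S$ \emph{outside} the Baer subpencil. Take $H=H_c$ with $c\in\mathbb{F}_{q^2}\setminus\mathbb{F}_q$, which exists since $q^2+1>q+1$. Every $\ell_i$ either lies in $S$, or lies in some $H_{c_i}$ with $c_i\in\mathbb{F}_q\cup\{\infty\}$. In the second case $H_{c_i}\neq H$, so $\ell_i\cap H\subseteq H_{c_i}\cap H=S$.

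Thus $H\cap\bigcup_i\ell_i\subseteq S$, which is a hyperplane of $H$. This contradicts $\bigcup_i\ell_i$ being a strong blocking set in $\mathrm{PG}(K-1,q^2)$. With this change, and the short argument for $c_i\in\mathbb{F}_q\cup\{\infty\}$ written out, your proposal becomes a complete proof.
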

\begin{lemma}\label{lem:viable_exist}
Let $M$ be an $[n,K,d]_{q^{2}}$ projective system and let the graph $G = (M,E)$ be a graph. Then we can find a viable set $\Lambda$ of size at most $n+2|E|$ for the associated set of lines $\mathcal{L}(M,G)$ in $\mathrm{PG}(K-1,q^{2})$.
\end{lemma}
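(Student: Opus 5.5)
The plan is to build $\Lambda$ from the points of $M$ themselves, plus two extra points on each line. Each line $\ell_e=\langle P_i,P_j\rangle$ of $\mathcal{L}(M,G)$, for an edge $e=P_iP_j\in E$, already contains the two points $P_i,P_j\in M$. So we take $\Lambda^{(e)}=\{P_i,P_j,\lambda_3^{(e)},\lambda_4^{(e)}\}$ and set $\Lambda=M\cup\bigcup_{e\in E}\{\lambda_3^{(e)},\lambda_4^{(e)}\}$. Its size is at most $n+2|E|$, because the vertices are shared among the lines and each edge contributes only two new points. If two edges span the same line (possible when $M$ has collinear points), we keep just one copy of that line, which only decreases the count. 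If $M$ is a multiset with repeated points, the corresponding edges give no line and can be discarded.

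The key step is choosing $\lambda_3^{(e)},\lambda_4^{(e)}\in\ell_e$ so that the four points do not lie on a common $\mathbb{F}_q$-subline. We use the standard fact that an $\mathbb{F}_q$-subline of $\mathrm{PG}(1,q^2)$ is determined by any three of its points, and has $q+1$ points. Concretely, choose coordinates on $\ell_e$ so that $P_i=\langle(1,0)\rangle$ and $P_j=\langle(0,1)\rangle$, and pick $\lambda_3=\langle(1,1)\rangle$. The unique $\mathbb{F}_q$-subline through $P_i,P_j,\lambda_3$ is then $\{\langle(1,a)\rangle: a\in\mathbb{F}_q\}\cup\{\langle(0,1)\rangle\}$. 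Since $q^2+1>q+1$, the line has a point outside this subline, for instance $\lambda_4=\langle(1,\omega)\rangle$ with $\omega\in\mathbb{F}_{q^2}\setminus\mathbb{F}_q$. With this choice the four points lie on no common subline: any subline containing them would contain $P_i,P_j,\lambda_3$, hence would equal the subline above, which misses $\lambda_4$.

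Finally, we check the definition of viability. Each line $\ell_e$ gets a 4-set $\Lambda^{(e)}\subseteq\ell_e$ not contained in any $\mathbb{F}_q$-subline, and $\Lambda$ is exactly the union of these sets. The only subtle point is that viability is phrased per line and allows shared points between different $\Lambda^{(e)}$, so the bound $|\Lambda|\le n+2|E|$ counts distinct points rather than pairs $(e,j)$. If the intended count is the number of derived lines instead, we note this is $4|E|$; the statement concerns the point-set size, so the sharing of the vertices of $M$ is what yields $n+2|E|$.

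The main obstacle is the subline fact: three distinct points of $\mathrm{PG}(1,q^2)$ lie on a unique $\mathbb{F}_q$-subline. This follows because $\mathrm{PGL}(2,q^2)$ is sharply 3-transitive and sublines are the images of $\mathrm{PG}(1,q)$. This is standard and we cite it, for example from \cite{lavrauw2015field}.
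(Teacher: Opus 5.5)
Your proof is correct and is essentially the paper's argument: on each line you reuse the two endpoints from $M$, add an arbitrary third point and a fourth point off the unique $\mathbb{F}_q$-subline through the first three, and count distinct points to get $|\Lambda|\le n+2|E|$; your explicit coordinates and your handling of degenerate edges are harmless additions. One side remark is off, though: $\mathcal{F}_{q,2}$ is injective on points, so the derived set consists of exactly $|\Lambda|\le n+2|E|$ lines, not $4|E|$, and this is precisely how the lemma is used in the proof of \cref{thm:non_quadratic}.
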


\begin{proof}
  Let us enumerate the edges of $G$ as $E = \{ e_1, e_2, \hdots, e_m \}$. If $e_i = P_a P_b$, then the corresponding line $\ell_i \in \mathcal{L}(M, G)$ is given by $\ell_i = \langle P_a, P_b \rangle$. We then take $\lambda^{(i)}_1 = P_a$ and $\lambda^{(i)}_2 = P_b$, and let $\lambda^{(i)}_3 = Q^{(a,b)}_3$ be an arbitrary third point on $\ell_i$. Since any three points on an $\mathbb{F}_{q^2}$-line define a unique $\mathbb{F}_q$-subline, we can then choose a fourth point $\lambda^{(i)}_4 = Q_4^{(a,b)}$ that avoids this subline.

Thus,
\[ \Lambda = \bigcup_{i=1}^m \left\{ \lambda^{(i)}_1, \lambda^{(i)}_2, \lambda^{(i)}_3, \lambda^{(i)}_4 \right\} = \mathcal{M} \cup \left( \bigcup_{P_a P_b \in E} \left\{ Q^{(a,b)}_3, Q^{(a,b)}_4 \right\} \right) \]
is viable for $\mathcal{L}(\mathcal{M},G)$, and $|\Lambda| \le n + 2|E|$. 
\end{proof}

We are ready now to apply \cref{lem:viable_exist} and  \cref{thm:viable_strong} to the construction of \cref{thm:quadratic_bound} in order to construct small strong blocking sets even when $q$ is not a square.
\begin{theorem}\label{thm:non_quadratic}
Let $t \geq 3$, and let $q > 2$ be such that $(q-1)^{-1} < \frac{t-2\sqrt{t-1}+ \frac{4\sqrt{t-1}}{q^{2}}}{t+ 2\sqrt{t-1} + \frac{4\sqrt{t-1}}{q^{2}}}$ and let $\varepsilon > 0$. Then there is an increasing sequence $\{k_{i}\}_{i \in \mathbb{N}}$ for which we can explicitly construct strong blocking sets in $\mathrm{PG}(k_{i}-1,q)$ of size at most 
$$\left(\frac{(t+1)(t+2\sqrt{t-1}+ \frac{4\sqrt{t-1}}{q^{2}})(q-1)}{2\Big(t(q-2) - 2q\sqrt{t-1} + \frac{4(q\sqrt{t-1} - 2\sqrt{t-1})}{q^{2}}\Big)}+\varepsilon \right)k_{i}(q+1)$$
\end{theorem}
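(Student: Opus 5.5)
We run the construction of \cref{thm:quadratic_bound} over the square field $\mathbb{F}_{q^2}$ and then descend to $\mathbb{F}_q$ via field reduction. Since $q^2$ is a square prime power with $\sqrt{q^2}=q$, the AG-code families over $\mathbb{F}_{q^2}$ satisfy $R+\delta \geq 1-(q-1)^{-1}$. This is why the hypothesis reads $(q-1)^{-1} < \frac{t-2\sqrt{t-1}+4\sqrt{t-1}/q^2}{t+2\sqrt{t-1}+4\sqrt{t-1}/q^2}$: it is exactly the hypothesis of \cref{thm:quadratic_bound} with $q$ replaced by $q^2$.

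\textbf{Setting up the graph construction.} Fix a small $\varepsilon_1>0$ and put $\lambda=2\sqrt{t-1}+\varepsilon_1$. Take
\[
R=\frac{t-\lambda+2\lambda/q^2}{t+\lambda+2\lambda/q^2}-\varepsilon_1-(q-1)^{-1}, \qquad \delta = 1-R-(q-1)^{-1}.
\]
Let $M_i$ be the resulting projective $[n_i,K_i,d_i]_{q^2}$ systems. By \cref{thm:bound_2}, there are explicit $(n_i,t,\lambda)$-graphs $G_i$ on $M_i$ for all large $i$; if $n_it$ is odd, pass to a subsequence or adjust $n_i$ by one.

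\textbf{Verifying the avoidance property.} \cref{lem:new_bound}, applied with field size $q^2$ (so the parameter is $f_{q^2}$ and the $2\lambda/q$ terms become $2\lambda/q^2$), gives $f_{q^2}(G_i)\geq n_i-d_i+1$ for large $i$, exactly as in the proof of \cref{thm:quadratic_bound}. Then \cref{lem:avoid_prop_param} and \cref{thm:avoid_prp} show that the union of the lines in $\mathcal{L}(M_i,G_i)$ is a strong blocking set in $\mathrm{PG}(K_i-1,q^2)$.

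\textbf{Field reduction.} By \cref{lem:viable_exist}, this line set has a viable set $\Lambda$ with $|\Lambda|\le n_i+2|E| = n_i(1+t)$, using $|E|=n_it/2$. By \cref{thm:viable_strong}, the union of the lines of $\mathcal{F}_{q,2}(\Lambda)$ is a strong blocking set in $\mathrm{PG}(k_i-1,q)$ with $k_i=2K_i$. It consists of at most $(t+1)n_i$ lines of $\mathrm{PG}(k_i-1,q)$, so it has at most $(t+1)n_i(q+1)$ points.

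\textbf{Size computation.} Since $K_i/n_i\to R$, we have $n_i\le K_i/(R-\varepsilon_1) = k_i/(2(R-\varepsilon_1))$ for large $i$. The size is therefore at most
\[
\frac{(t+1)}{2(R-\varepsilon_1)}\,k_i(q+1).
\]
Substituting $R$ and clearing the factor $(q-1)$, the main term is
\[
\frac{(t+1)(t+\lambda+2\lambda/q^2)(q-1)}{2\bigl[(t-\lambda+2\lambda/q^2)(q-1)-(t+\lambda+2\lambda/q^2)\bigr]}.
\]
Putting $\lambda=2\sqrt{t-1}$ in the denominator gives $t(q-2)-2q\sqrt{t-1}+4(q\sqrt{t-1}-2\sqrt{t-1})/q^2$, which is the stated expression. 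Continuity in $\varepsilon_1$ then lets us absorb the perturbation into $\varepsilon$.

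\textbf{Expected obstacles.} The main care point is bookkeeping rather than anything conceptual:
\begin{itemize}
\item The graph parameter must be $f_{q^2}$, since the geometry lives over $\mathbb{F}_{q^2}$.
\item The dimension doubles under field reduction, which is where the factor $2$ in the denominator comes from.
\item The line count $n+2|E|$ replaces the point count $n+(q-1)|E|$ of \cref{rem:blockingsetsize}, which changes $t$ into $t+1$ and $q$ into $q+1$.
\end{itemize}
Positivity of the denominator is guaranteed by the hypothesis.
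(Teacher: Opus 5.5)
Your proposal is correct and follows the paper's proof essentially step for step: AG codes over $\mathbb{F}_{q^2}$ with $R+\delta\ge 1-(q-1)^{-1}$, explicit near-Ramanujan graphs, the spectral lower bound on the field-size graph parameter, the avoidance property, a viable set of size $n_i(t+1)$, and field reduction with $k_i=2K_i$. Your explicit use of $f_{q^2}$ rather than $f_q$ (the paper's proof writes $f_q$) is the right bookkeeping: there are $q^2+1$ hyperplanes through a codimension-$2$ space of $\mathrm{PG}(K_i-1,q^2)$, and $f_{q^2}\le f_q$, so it is $f_{q^2}$ that must exceed $n_i-d_i$, and this is what produces the $4\sqrt{t-1}/q^2$ terms in the bound.
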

\begin{proof}
The proof of this theorem follows the same direction as \cref{thm:quadratic_bound}, and so we will highlight the changes between them. As before we let $\varepsilon_{1} = \varepsilon_{1}(t,q,\lambda) > 0$ be sufficiently small and we set $\lambda = 2\sqrt{t-1} + \varepsilon_{1}$. We have $(q - 1)^{-1} < \frac{t-\lambda + \frac{2\lambda}{q^{2}}}{t + \lambda + \frac{2\lambda}{q^{2}}} - 2\varepsilon_{1}$ and we set also $R = \frac{t-\lambda + \frac{2\lambda}{q^{2}}}{t+ \lambda + \frac{2\lambda}{q^{2}}} - \varepsilon_{1} - (q - 1)^{-1}$ and $\delta = 1-R - (q-1)^{-1} = 1 - \frac{t-\lambda + \frac{2\lambda}{q^{2}}}{t+ \lambda + \frac{2\lambda}{q^{2}}} + \varepsilon_{1}$. \\ 
When we consider some $\{\mathcal{A}_{n_{i},R}\}_{i \in \mathbb{N}}$ to be an $(R,\delta)_{q^{2}}-$family of projective $[n_{i},K_{i},d_{i}]_{q^{2}}$ systems, and $\{G_{i}\}_{i > K}$ to be a sequence of $(n_{i},t,\lambda)$ graphs. Our choice of parameters ensures that if we take $i$ to be sufficiently large then we have that $f_{q}(G) \geq n_{i} -d_{i} + 1$.We conclude that the set of lines $\mathcal{L} = \mathcal{L}(\mathcal{A}_{n_{i},R},G)$ has the avoidance property in $\mathrm{PG}(K-1,q^{2})$ and hence by \cref{thm:avoid_prp} the union $\mathcal{B}(\mathcal{A}_{n_{i},R},G)$ of those lines forms a strong blocking set. Then by \cref{lem:viable_exist} we can find a set $\Lambda$ that is viable for $\mathcal{L}$ of size at most $|\mathcal{A}_{n_{i},R}| + 2e(G_{n_{i}}) = n_{i}(t+1).$ By \cref{thm:viable_strong} it shows that the union of the lines in the derived set $\mathbb{F}_{q,2}(\Lambda)$ is a strong blocking set in $\mathrm{PG}(2K_{i}-1,q)$. Because each line in $\mathrm{PG}(2K_{i}-1,q)$ has $q+1$ points, this strong blocking set has size at most $n_{i}(t+1)(q+1)$. We know as well that $R = \lim_{i \rightarrow \infty} \frac{K_{i}}{n_{i}}$, and recalling our choices for the parameters $R$ and $\lambda$ and setting $k_{i} = 2K_{i}$ then yields to the claimed bound.
\end{proof}
As before, we can determine the optimal degree $d$ to use by minimizing the following quantity: 
$$M_{q}(t) \coloneqq \frac{(t+1)(t+2\sqrt{t-1} + \frac{4\sqrt{t-1}}{q^{2}})(q-1)}{2\left(t(q-2) -2q\sqrt{t-1} + \frac{4(q\sqrt{t-1} - 2\sqrt{t-1})}{q^{2}}\right)}$$
We make now the substitution $t = 1+y^{2}$, following which we can find that the zeros of the derivative coincide with those of $\phi_{q}(y) = ((q-1)(q^{5}(y^{5}-2y^{4}-2y^{3}-2y^{2}+y+4)-2q^{4}(y^{5}+y^{4}+2y^{3}+y^{2}+y+2) +8q^{3}(y^{4}+y^{2})-16q^{2}y^{2}(y^{2}+y+1)+16qy^{3}-32y^{3})$.
When $q$ grows, the roots of $\phi_{q}(y)$ converge to the roots of $(y-1)(y+1)(y^{3}-2y^{2}-y-4)$.
This polynomial has a unique root $y_{0}$ because according to the restriction in our domain we cannot have $y= \pm 1$ and the root must be larger than 1, and that will correspond to 
$$t_{0} = 1+ y_{0}^{2} = 3 + (31-2\sqrt{58})^{1/3} + (31 + 2\sqrt{58})^{1/3} \approx 9.0967.$$
Hence, the asymptotically optimal degree must be either $d = 9$ or $d = 10$ and after some inspection in the values we will have that $M_{q}(10) > M_{q}(9) \rightarrow \frac{5}{49}(113+72\sqrt{2}) \approx 21.92$ while $q \rightarrow \infty$. 
We can see here that asymptotically for large values of $q$, theorem \cref{thm:non_quadratic} constructs a larger strong blocking set than \cref{thm:quadratic_bound}, also this theorem works over any field lifting the restriction being only quadratic, but it loses some value and slightly outerperform for small values of $q$, as we can see in the following table. We found out that field reduction method can lead to significantly smaller constants for small values of $q$.
Note that the process of derivation can be repeated by using maps $\mathcal{F}_{q, 2^h}$ for $h \geq 2$ as well. 

In the construction of a viable set, we choose four points from each line and  bound the number of lines in each iteration as being at most four times larger than the previous step. This will lead us to an explicit construction of strong blocking sets in $\mathrm{PG}(k_{i}-1,q)$ of size at most 
$$\left(\frac{2^{r-1}(t+1)(t+2\sqrt{t-1}+ \frac{4\sqrt{t-1}}{q^{2^{r+1}}})(q^{2^{r}}-1)}{t(q^{2^{r}}-2) - 2q^{2^{r}}\sqrt{t-1} + \frac{4(q^{2^{r}}\sqrt{t-1} - 2\sqrt{t-1})}{q^{2^{r+1}}}}+\varepsilon \right)k_{i}(q+1)$$
for some increasing sequence $\{k_{i}\}_{i \in \mathbb{N}}$ provided that $(q^{2^{r}}-1)^{-1} < \frac{t-2\sqrt{t-1}+ \frac{4\sqrt{t-1}}{q^{2^{r+1}}}}{t+ 2\sqrt{t-1} + \frac{4\sqrt{t-1}}{q^{2^{r+1}}}}$.
Throughout the performance of the optimization it was revealed that the third derivation $(r=3)$ minimizes the size of the blocking sets for $q = 2$ and that the second derivation $(r=2)$ is optimal for $3 \leq q \leq 5$.
We summarize all the results in \Cref{tab:my_table}, where we give the size of the smallest strong blocking sets obtained from our explicit construction.

\begin{center}
\begin{table}[htb]
    \centering
    \begin{tabular}{|c|c|c|c|c|}
        \hline
        $q$ & $\mathrm{argmin} \ M_{q}(t)$ & \text{Upper bound using} $f_{q}(G)$ & \text{Upper bound using} $\iota(G)$  \\ \hline
        3   & $t = 74 $ & 268.21 & 296.12  \\ \hline
        4   & $t = 34 $ & 98.88 & 107.35  \\ \hline
        5   & $t = 24 $ & 65.21 & 69.41  \\ \hline
        7   & $t = 18 $ & 44.61 & 46.32  \\ \hline
        8   & $t = 16 $ & 40.22 & 41.45 \\ \hline
        9   & $t = 15 $ & 37.25 & 38.18 \\ \hline
        11   & $t = 14 $ & 33.50 & 34.08\\ \hline
        13   & $t = 13 $ & 31.22 & 31.62\\ \hline
        $16 \leq q \leq 19$  & $t = 12$ & 28.60 & 29.36 \\ \hline
        $23 \leq q \leq 32$   & $t = 11$ & 26.62 & 26.73 \\ \hline
        $37 \leq q \leq 109$   & $t = 10$ & 24.70 & 24.75 \\ \hline
        $q \geq 113$   & $t = 9$  & 22.81& 22.81 \\ \hline 
    \end{tabular}
    \caption{For given ranges of prime powers $q$, this table provides the degrees $t$ that minimize the size of strong blocking sets that constructed in \cref{thm:non_quadratic}, and upper bounds the normalized constant $C$ in front of $k(q+1)$ on their corresponding sizes (third column) and in fourth column we provide the previous bound in the constant $C$ using the graph parameter of vertex integrity.}
    \label{tab:non_square_primes} 
\end{table}
\end{center}

\begin{center}
\begin{table}[htb]
    \centering
    \begin{tabular}{|c|c|c|c|c|}
        \hline
        $q$ & Construction & \text{Upper bound using} $f_{q}(G)$ & \text{Upper bound using} $\iota(G)$    \\ \hline
        2   & 3rd derivation  & 116.42 & 117.46     \\   \hline
        3   & 2nd derivation   & 74.50 & 76.38     \\ \hline
        4   & 2nd derivation & 58.21 & 58.73     \\ \hline
        5   & 2nd derivation  & 52.39 & 52.59   \\ \hline
        7   & 1st derivation  & 44.61 & 46.31    \\ \hline
        8   & 1st derivation  & 40.22 & 41.45    \\ \hline
        9   & 1st derivation  & 37.25 & 38.19   \\ \hline
        11   & 1st derivation  & 33.50 & 34.08    \\ \hline
        13   & 1st derivation  & 31.22 & 31.62   \\ \hline
        16   & 1st derivation  & 29.11 & 29.36  \\ \hline
        17   & 1st derivation & 28.60 &28.82    \\ \hline
        19   & 1st  derivation & 27.77 & 27.96   \\ \hline
        $23 \leq q \leq 25$  &  1st derivation  & 26.19 & 26.29  \\ \hline
        $27 \leq q \leq 32$   & 1st derivation & 25.20 & 25.25  \\ \hline
        $37 \leq q \leq 49$   & 1st derivation & 23.98 & 24.00 \\ \hline
        $53 \leq q \leq 109$   & 1st derivation & 22.84 &  22.85   \\ \hline
        $113 \leq q \leq 1217$   & 1st derivation & 22.00 &  22.00 \\ \hline
        $1223 \leq q \leq 12763$   & 1st derivation & 21.93 & 21.93   \\ \hline   
        $q \geq 12769$, $q$ non-square   & 1st derivation  & 21.93 & 21.93   \\ \hline 
        $12769\leq q \leq 70603$, $q$ square  & original  & 20.00 & 20.06  \\ \hline 
        $q > 70603$, $q$ square  &  original  & 19.98 & 20.00  \\ \hline 
        
    \end{tabular}
    \caption{For given ranges of  prime powers $q$, this table provides the best upper bound on the size of strong blocking sets in finite projective spaces over $\mathbb{F}_{q}$ that we have obtained in the normalized constant $C$ in front of $k(q+1)$ (third column)  together with an indication of which construction achieve this: Original ($\cref{thm:quadratic_bound}$), 1st derivation ($\cref{thm:non_quadratic}$) or $r^{\textit{th}}$ derivation.
    The fourth columns contains the earlier bounds of \cite{ABDN2023} that used vertex integrity of a graph.}
    \label{tab:my_table}
\end{table}
\end{center}

\section{Conclusion}\label{sec:conclusion}
We introduced a new graph parameter that incorporates the size of the finite field and used it to give improved explicit constructions of strong blocking sets. 
Specifically, we provide a lower bound on this new graph parameter for the family of $d-$regular graphs that depends on both the eigenvalues of the adjacency matrix and the size of the finite filed $\mathbb{F}_{q}$, this deviating from the earlier work \cite{ABDN2023} that relied on a graph parameter which was independent of the field size.

In a recent preprint, Chen \cite{chen2026simpleexplicitconstructionsmaller} has improved the explicit construction of strong blocking sets in $\mathrm{PG}(k -1, q)$ for all $q \leq 11$ using concatenation of codes. 
For $q \geq 13$, however, the strong blocking sets from \Cref{thm:non_quadratic} and \Cref{thm:quadratic_bound} are better than this and all previous constructions \cite{bartoli2023small,cohen2016yet,chen2026simpleexplicitconstructionsmaller,ABDN2023}. 

Several directions for future work remain.
First, it would be nice to find a graph theoretic construction of strong blocking sets that does not require the stronger line avoidance property,
Such a construction might involve a new graph parameter that  further optimizes the constant and leads to a smaller explicit constructions of strong blocking sets.
Second, motivated by \cite{bishnoi2026explicit}, it would be interesting to extend our constructions based on the new graph parameter to strong blocking sets with respect to codimension-$2$ spaces.
Specifically, we would like to define and analyze similar parameter for $3$-uniform hypergraphs which could improve the bounds from \cite{bishnoi2026explicit}.
Ultimately, one can try generalizing our constructions to strong $s$-blocking sets. 
It would be interesting to match, or improve, the recent bounds of Guo et al.~\cite{guo2026explicit} using these graph-theoretic techniques on strong $s$-blocking sets.

\section*{Acknowledgments}
The work of the first author has been supported by  the Dutch Research Council (NWO) through the NWO Talent Programme Vidi project ``Extremal problems in finite geometry'' (project number VI.Vidi.243.039).\\

\noindent
The work of the second author has been partially supported by the MEYS of the Czech Republic
under the INTER-EXCELLENCE II program (project No. LUAUS26294), the European Union under the project Robotics and advanced industrial production (reg. no. CZ.02.01.01/00/22\_008/0004590) and also supported by the Grant Agency of the Czech Technical University in Prague, grant No.SGS26/181/OHK3/3T/18.
The research visit of the second author to TU Delft was supported by the Vidi grant of the first author.

\subsection*{Statement on software use}
The authors did not use AI for any mathematical proofs or writing. 
WolframAlpha was used for finding minima for large values of $q$ in \Cref{tab:non_square_primes,tab:square_primes} and for searching for optimal values of $r$ in \Cref{tab:my_table}. 
Freely available generative AI tools were used to create an example of \Cref{fig:visual_proof_avoid} and then the final modification was done by the authors. 
Finally, the authors used common search engines that by default use AI, for exploring relevant literature.

\printbibliography

@article{alfarano2022geometric,
  author = {Alfarano, G. N. and Borello, M. and Neri, A.},
  title = {A geometric characterization of minimal codes and their asymptotic performance},
  journal = {Advances in Mathematics of Communications},
  volume = {16},
  number = {1},
  pages = {115--133},
  year = {2022}
}

@article{alfarano2022three,
  author = {Alfarano, G. N. and Borello, M. and Neri, A. and Ravagnani, A.},
  title = {Three combinatorial perspectives on minimal codes},
  journal = {SIAM Journal on Discrete Mathematics},
  volume = {36},
  number = {1},
  pages = {461--489},
  year = {2022}
}

@article{alon1988explicit,
  author = {Alon, N. and Chung, F. K.},
  title = {Explicit construction of linear sized tolerant networks},
  journal = {Discrete Mathematics},
  volume = {72},
  number = {1--3},
  pages = {15--19},
  year = {1988}
}

@article{ashikhmin1998minimal,
  author = {Ashikhmin, A. and Barg, A.},
  title = {Minimal vectors in linear codes},
  journal = {IEEE Transactions on Information Theory},
  volume = {44},
  number = {5},
  pages = {2010--2017},
  year = {1998}
}

@article{bartoli2023small,
  author = {Bartoli, Daniele and Borello, Martino},
  title = {Small strong blocking sets by concatenation},
  journal = {SIAM Journal on Discrete Mathematics},
  volume = {37},
  number = {1},
  pages = {65--82},
  year = {2023}
}

@article{davydov2011linear,
  author = {Davydov, A. A. and Giulietti, M. and Marcugini, S. and Pambianco, F.},
  title = {Linear nonbinary covering codes and saturating sets in projective spaces},
  journal = {Advances in Mathematics of Communications},
  volume = {5},
  number = {1},
  pages = {119--147},
  year = {2011}
}

@article{fancsali2014lines,
  author = {Fancsali, S. and Sziklai, P.},
  title = {Lines in higgledy-piggledy arrangement},
  journal = {The Electronic Journal of Combinatorics},
  volume = {21},
  number = {2},
  pages = {P2.56},
  year = {2014}
}

@phdthesis{Haemers1979,
  author = {W. H. Haemers},
  title  = {Eigenvalue Techniques in Design and Graph Theory},
  school = {Eindhoven University of Technology},
  year   = {1979},
}

@book{vanLint_book,
  author = {van Lint, J. H.},
  title = {Introduction to Coding Theory},
  series = {Graduate Texts in Mathematics},
  volume = {86},
  publisher = {Springer-Verlag},
  address = {Berlin, Heidelberg},
  year = {1982},
}

@article{guo2026explicit,
  author  = {Guo, Zeyu and Raj, Roshan and Shangguan, Chong and Zhang, Zihan},
  title   = {Explicit Rank Extractors and Subspace Designs via Function Fields, with Applications to Strong Blocking Sets},
  journal = {arXiv preprint arXiv:2604.13431},
  year    = {2026},
}

@misc{coding_theory_book,
  author = {Guruswami, V. and Rudra, A. and Sudan, M.},
  title = {Essential coding theory},
  year = {2022},
  note = {Draft},
  url = {https://cse.buffalo.edu/faculty/atri/courses/coding-theory/book/}
}

@article{heger2021short,
  author = {H{\'e}ger, T. and Nagy, Z. L.},
  title = {Short minimal codes and covering codes via strong blocking sets in projective spaces},
  journal = {IEEE Transactions on Information Theory},
  volume = {68},
  number = {2},
  pages = {881--890},
  year = {2022}
}

@article{heger2015search,
  author = {H{\'e}ger, T. and Patk{\'o}s, B. and Tak{\'a}ts, M.},
  title = {Search problems in vector spaces},
  journal = {Designs, Codes and Cryptography},
  volume = {76},
  number = {2},
  pages = {207--216},
  year = {2015}
}

@article{hlw2006,
  author = {Hoory, S. and Linial, N. and Wigderson, A.},
  title = {Expander graphs and their applications},
  journal = {Bulletin of the American Mathematical Society},
  volume = {43},
  number = {4},
  pages = {439--561},
  year = {2006}
}

@article{lubotzky1988ramanujan,
  author = {Lubotzky, A. and Phillips, R. and Sarnak, P.},
  title = {Ramanujan graphs},
  journal = {Combinatorica},
  volume = {8},
  number = {3},
  pages = {261--277},
  year = {1988}
}

@inproceedings{massey1993minimal,
  author = {Massey, J. L.},
  title = {Minimal codewords and secret sharing},
  booktitle = {Proceedings of the 6th joint Swedish-Russian international workshop on information theory},
  pages = {276--279},
  year = {1993}
}

@article{miklos1984linear,
  author = {Mikl{\'o}s, D},
  title = {Linear binary codes with intersection properties},
  journal = {Discrete Applied Mathematics},
  volume = {9},
  number = {2},
  pages = {187--196},
  year = {1984}
}

@article{tang2021full,
  author = {Tang, C. and Qiu, Y. and Liao, Q. and Zhou, Z.},
  title = {Full characterization of minimal linear codes as cutting blocking sets},
  journal = {IEEE Transactions on Information Theory},
  volume = {67},
  number = {6},
  pages = {3690--3700},
  year = {2021}
}

@book{tsfasman2013algebraic,
  author = {Tsfasman, M. and Vladut, S. G.},
  title = {Algebraic-geometric codes},
  volume = {58},
  publisher = {Springer Science \& Business Media},
  year = {2013}
}

@inproceedings{ta2017explicit,
  author = {Ta-Shma, Amnon},
  title = {Explicit, almost optimal, epsilon-balanced codes},
  booktitle = {Proceedings of the 49th Annual ACM SIGACT Symposium on Theory of Computing},
  pages = {238--251},
  year = {2017}
}

@article{tsfasman1982modular,
  author = {Tsfasman, Michael A and Vl{\u{a}}dut, S. G. and Zink, T.},
  title = {Modular curves, {S}himura curves, and {G}oppa codes, better than the {V}arshamov--{G}ilbert bound},
  journal = {Mathematische Nachrichten},
  volume = {109},
  number = {1},
  pages = {21--28},
  year = {1982}
}

@article{margulis1988,
  author = {Margulis, G. A.},
  title = {Explicit group-theoretic constructions of combinatorial schemes and their applications in the construction of expanders and concentrators},
  journal = {Problemy Peredachi Informatsii},
  volume = {24},
  number = {1},
  pages = {51--60},
  year = {1988}
}

@article{alon2021explicit,
  author = {Alon, Noga},
  title = {Explicit expanders of every degree and size},
  journal = {Combinatorica},
  volume = {41},
  number = {4},
  pages = {447--463},
  year = {2021}
}

@incollection{lavrauw2015field,
  author = {Lavrauw, M. and Van de Voorde, G.},
  title = {Field reduction and linear sets in finite geometry},
  booktitle = {Topics in Finite Fields},
  editor = {Kyureghyan, G. and Mullen, G. L. and Pott, A.},
  series = {Contemporary Mathematics},
  volume = {632},
  pages = {271--293},
  publisher = {American Mathematical Society},
  year = {2015}
}

@article{bartoli2022cutting,
  author = {Bartoli, Daniele and Cossidente, Antonio and Marino, Giuseppe and Pavese, Francesco},
  title = {On cutting blocking sets and their codes},
  journal = {Forum Mathematicum},
  volume = {34},
  number = {2},
  pages = {347--368},
  year = {2022}
}

@article{bonini2021minimal,
  author = {Bonini, Matteo and Borello, Martino},
  title = {Minimal linear codes arising from blocking sets},
  journal = {Journal of Algebraic Combinatorics},
  volume = {53},
  pages = {327--341},
  year = {2021}
}

@article{Bishnoi2023trifference,
  author = {Bishnoi, A. and D'haeseleer, J. and Gijswijt, D. and Potukuchi, A.},
  title = {Blocking sets, minimal codes and trifferent codes},
  journal = {Journal of the London Mathematical Society},
  volume = {109},
  number = {6},
  pages = {e12938},
  year = {2024}
}

@article{ABDN2023,
  author = {Alon, N. and Bishnoi, A. and Das, S. and Neri, A.},
  title = {Strong blocking sets and minimal codes from expander graphs},
  journal = {Transactions of the American Mathematical Society},
  volume = {377},
  number = {8},
  pages = {5389--5410},
  year = {2024}
}

@article{ABN2023,
  author = {Alfarano, G. N. and Borello, M. and Neri, A.},
  title = {Outer strong blocking sets},
  journal = {The Electronic Journal of Combinatorics},
  volume = {31},
  number = {2},
  pages = {P2.18},
  year = {2024}
}

@article{vadhan2012pseudorandomness,
  author = {Vadhan, S. P.},
  title = {Pseudorandomness},
  journal = {Foundations and Trends in Theoretical Computer Science},
  volume = {7},
  number = {1--3},
  pages = {1--336},
  year = {2012}
}

@incollection{couvreur2021algebraic,
  author = {Couvreur, Alain and Randriambololona, Hugues},
  title = {Algebraic geometry codes and some applications},
  booktitle = {Concise Encyclopedia of Coding Theory},
  editor = {Huffman, W. C. and Kim, J.-L. and Sol{\'e}, P.},
  pages = {307--362},
  publisher = {Chapman and Hall/CRC},
  year = {2021}
}

@article{justesen1972class,
  author = {Justesen, J{\o}rn},
  title = {Class of constructive asymptotically good algebraic codes},
  journal = {IEEE Transactions on Information Theory},
  volume = {18},
  number = {5},
  pages = {652--656},
  year = {1972}
}

@article{cohen2016yet,
  author = {Cohen, G. and Mesnager, S. and Randriam, H.},
  title = {Yet another variation on minimal linear codes},
  journal = {Advances in Mathematics of Communications},
  volume = {10},
  number = {1},
  pages = {53--61},
  year = {2016}
}

@article{nilli1991second,
  author = {Nilli, A.},
  title = {On the second eigenvalue of a graph},
  journal = {Discrete Mathematics},
  volume = {91},
  number = {2},
  pages = {207--210},
  year = {1991}
}

@article{scotti2024lower,
  title={On the lower bound for the length of minimal codes},
  author={Scotti, Martin},
  journal={Discrete Mathematics},
  volume={347},
  number={1},
  pages={113676},
  year={2024},
  publisher={Elsevier}
}

@article{bishnoi2026explicit,
  author = {Bishnoi, A. and Tomon, Istv{\'a}n},
  title = {Explicit constructions of optimal blocking sets and minimal codes},
  journal = {Combinatorica},
  volume = {46},
  number = {2},
  pages = {13},
  year = {2026}
}

@article{lia2025note,
  author = {Lia, S. and Van de Voorde, G.},
  title = {A note on strong blocking sets and higgledy-piggledy sets of lines},
  journal = {The Electronic Journal of Combinatorics},
  volume = {32},
  number = {3},
  pages = {P3.27},
  year = {2025},
}

@article{borello2025geometry,
  author = {Borello, M. and Schmid, W. and Scotti, M.},
  title = {The geometry of intersecting codes and applications to additive combinatorics and factorization theory},
  journal = {Journal of Combinatorial Theory, Series A},
  volume = {214},
  pages = {106023},
  year = {2025},
}

@misc{chen2026simpleexplicitconstructionsmaller,
      title={A Simple Explicit Construction of Smaller Strong Blocking Sets}, 
      author={Hao Chen},
      year={2026},
      eprint={2610.12819},
      archivePrefix={arXiv},
      primaryClass={math.CO},
      
}

\end{document}